%
%
%
\documentclass[12pt]{amsart}
\usepackage[american]{babel}
\usepackage[utf8x]{inputenc} 
\usepackage{amsfonts, latexsym, amssymb, amsgen, amsbsy, amstext, amsopn, amsmath, txfonts, mathrsfs,yfonts}
\usepackage[sans]{dsfont}
\usepackage{cmll}
\usepackage{amsthm}
\usepackage{color}
\usepackage[all]{xy}
\usepackage{fancyhdr}
\usepackage{newlfont}
\usepackage{setspace}
\usepackage{verbatim}
\usepackage{hyperref} 

\usepackage{xcolor}

\AtBeginDocument{\def\MR#1{}} 

\usepackage[a4paper,top=3.25cm,bottom=3.2cm,left=26mm,right=26mm]{geometry}

\usepackage[T1]{fontenc}
\usepackage{amsthm}
\usepackage{color}



\renewcommand{\H}{\mathbb{H}}

\newcommand{\N}{\mathbb{N}}

\newcommand{\R}{\mathbb{R}}


\newcommand{\vH}{\varmathbb{H}}



\newcommand{\cp}{\mathfrak{p}}
\newcommand{\Hla}{\mathds{M}}
\newcommand{\cn}{\mathfrak{n}}
\newcommand{\kI}{\mathfrak{I}}


\newcommand{\Ht}{\mathds{M}}
\newcommand{\Ha}{\mathds{V}}
\newcommand{\Hb}{\mathds{W}}
\newcommand{\Pds}{\mathds{P}}


\newcommand{\beq}{\begin{equation}}
\newcommand{\eeq}{\end{equation}}


\newcommand{\lan}{\langle}
\newcommand{\ran}{\rangle}

\newcommand{\pa}[1]{\left( #1 \right)}               
\newcommand{\set}[1]{\left\{ #1 \right\}}            
\newcommand{\ban}[1]{\left\langle  #1 \right\rangle}  



\theoremstyle{plain}
\newtheorem{theorem}{Theorem}[section]
\newtheorem{lemma}[theorem]{Lemma}
\newtheorem{proposition}[theorem]{Proposition}
\newtheorem{corollary}[theorem]{Corollary}
\newtheorem{remark}[theorem]{Remark}

\newtheorem{example}[theorem]{Example}

\newcommand{\norm}[1]{\left\lVert#1\right\rVert}
\DeclareMathOperator{\Lie}{Lie}
\DeclareMathOperator{\ad}{ad}
\DeclareMathOperator{\spn}{span}
\DeclareMathOperator{\End}{End}
\DeclareMathOperator{\Id}{Id}
\DeclareMathOperator{\im}{Im}

\begin{document}
	
\title[The Michor--Mumford conjecture in Hilbertian H-type groups]{The Michor--Mumford conjecture in Hilbertian H-type groups}
\author{Valentino Magnani}
\address{Valentino Magnani, Dip.to di Matematica, Universit\`a di Pisa \\
	Largo Bruno Pontecorvo 5 \\ I-56127, Pisa}
\email{valentino.magnani@unipi.it}
\author{Daniele Tiberio}
\address{Daniele Tiberio, 
	SISSA, via Bonomea 265, 34136 Trieste, Italy}
\email{dtiberio@sissa.it}

\subjclass[2020]{Primary 58B20. Secondary 53C22.}
\keywords{Hilbert manifold, infinite dimensional Heisenberg group, weak Riemannian metric, geodesic distance, sectional curvature}

\begin{abstract}
We introduce infinite dimensional Hilbertian H-type groups equipped with weak, graded, left invariant Riemannian metrics. 
For these Lie groups, we show that the vanishing of the geodesic distance and the local unboundedness of the sectional curvature coexist.  
The result validates a deep phenomenon conjectured in an influential 2005 paper by Michor and Mumford, namely, the vanishing of the geodesic distance is linked to the local unboundedness of the sectional curvature.
We prove that degenerate geodesic distances appear for a large class of weak, left invariant Riemannian metrics.
Their vanishing is rather surprisingly related to the infinite dimensional sub-Riemannian structure of Hilbertian H-type groups. The same class of weak Riemannian metrics yields the nonexistence of the Levi-Civita covariant derivative.
\end{abstract}

\thanks{V.M. acknowledges the APRISE - {\em Analysis and Probability in Science} project, funded by the University of Pisa, grant PRA 2022 85, and the MIUR Excellence Department Project awarded to the Department of Mathematics, University of Pisa, CUP I57G22000700001. D.T. was supported by the European Research Council (ERC) under the European Union's Horizon 2020 research and innovation programme (grant agreement No. 945655).
}

\maketitle

\tableofcontents


\section{Introduction}

The study of infinite dimensional manifolds is a vast and fascinating area
of Mathematics which, for instance, embraces Differential Geometry, Lie group theory, and PDEs.
A recent introduction to these topics can be found in the monograph \cite{Schmeding2023IDG} and the lecture notes \cite{Bruveris2016Notes,Maor2022Notes}.
Some foundational works are \cite{Hamilton1982,KriMic1997}, 
and more specific information on infinite dimensional Lie groups can be 
found for instance in \cite{Omori1979,KheWen2009GIDG} and in the surveys \cite{Milnor1984,Nee06:Tow,Glo06}.

We begin by highlightening Riemannian Hilbert manifolds, that constitute a class of infinite dimensional manifolds 
modeled on a Hilbert space. Their Riemannian metrics  induce the manifold topology on tangent spaces,
hence they are called {\em strong Riemannian metrics}
and their geodesic distance is a distance function, that separates points. While the local geometry of Riemannian Hilbert manifolds shares some analogies with the finite dimensional case, for instance 
the Levi-Civita connection always exists, certain global properties fail to hold.
For more information, we refer the reader to the interesting survey \cite{BilMer2017} and the references therein. 

Only for infinite dimensional manifolds, the so-called {\em weak Riemannian metrics} may induce on tangent spaces weaker topologies than the manifold topology, \cite{AMR88,Bruveris2016Notes, Maor2022Notes,Schmeding2023IDG}.
If the manifold is not modeled on a Hilbert space, then the Riemannian metric must be weak. 
We focus our attention on Lie groups modeled on a Hilbert space, where both weak and strong Riemannian metrics can be defined. Therefore, we use the terminology ``{\em strictly weak Riemannian metric}'' to emphasize the cases where the weak Riemannian metric is not strong.

For strictly weak Riemannian metrics, a striking phenomenon can occur, resulting in the vanishing of the geodesic distance between distinct points. We say that such geodesic distance is {\em degenerate}.
First examples of such surprising fact were discovered in different classes of Fréchet manifolds \cite{EP1993}, \cite{MM05}, \cite{MM06}. Simple examples of degenerate geodesic distances are also available in Hilbert manifolds, \cite{MagTib2020,MagTib2023}, see also 
\cite{BHP20,BauHesMao2023arxiv} for more information. 

In the 2005 paper \cite{MM05}, P.~Michor and D.~Mumford conjectured a relationship between the vanishing of the
geodesic distance and the local unboundedness of the sectional curvature. 
They proposed a fascinating interpretation behind this phenomenon: some parts of the 
infinite dimensional manifold ``wrap up on theirselves'' allowing to find curves connecting two distinct points and having shorter 
and shorter length, up to reaching vanishing infimum, see \cite{MM05} and \cite[Section~1.2]{BHP20}.
In Mathematical terms, we may rephrase this phenomenon as follows: whenever a weak Riemannian metric admits a degenerate geodesic distance, then the sectional curvature must be unbounded on some special sequences of planes that stay in a neighborhood of some point.
For infinite dimensional Lie groups, the homogeneity by translations allows this point to be the unit element.

We point out that when a specific choice of strictly weak Riemannian metric on the 
infinite dimensional Heisenberg group is considered, then the following phenomenon appears: the blow-up of the sectional curvature
occurs along some planes that are related to the shrinking curves which connect the distinct points, where the 
geodesic distance vanishes, \cite{MagTib2023}.
If we take into account the above comments and the subsequent Theorem~\ref{th:MichorMumford}, then we may interpret the Michor--Mumford conjecture in infinite dimensional Lie groups as follows.
Considering an infinite dimensional Lie group,
equipped with a weak, left invariant Riemannian metric and a degenerate geodesic distance, {\em then we expect that the sectional curvature at the unit element is positively unbounded}.

In the present paper, we introduce {\em Hilbertian H-type groups}, whose geometry validates the previous version of the conjecture, with respect to a large class of strictly weak, left invariant Riemannian metrics.
Hilbertian H-type groups include the infinite dimensional Heisenberg group of \cite{MagTib2023} and in the finite dimensional case
they exactly coincide with the well known H-type groups, that were discovered by A.\ Kaplan, \cite{Kaplan1980,Kaplan1981,Kaplan1983},
see also \cite{KapRic1982}.
We notice that Kaplan's definition perfectly works also through the infinite dimensional interpretation. 
On the other hand, the effective existence of infinite dimensional H-type groups needs to be verified. In Section~\ref{sect:Htype}, we provide an infinite dimensional construction, from which one may notice that there are infinitely many infinite dimensional Hilbertian H-type Lie groups, see Remark~\ref{rem:infiniteHtype}. 

We focus our attention on the ``natural'' {\em weak} Riemannian metrics on Hilbertian H-type groups, that are left invariant and make the subspaces $\Ha$ and $\Hb$ orthogonal. Borrowing the terminology from the finite dimensional case, we say that such metrics are {\em graded}. 
For instance, the Cameron-Martin subgroup of \cite{DriGor2008} is a two step, infinite dimensional Lie group  equipped with a strong and graded Riemannian metric. Thus, the next statement validates our interpretation of the Michor--Mumford conjecture in Hilbertian H-type groups. 

\begin{theorem}\label{th:MichorMumford}
Let $\sigma$ be a weak, graded Riemannian metric on a Hilbertian H-type group. If the metric $\sigma$ yields a degenerate geodesic distance, then the sectional curvature at the unit element exists on a sequence of planes and it is positively unbounded.
\end{theorem}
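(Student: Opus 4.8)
\emph{Plan of proof.}\ Denote by $\Ha$ and $\Hb$ the horizontal layer and the centre of the Lie algebra of the Hilbertian H-type group, by $\langle\cdot,\cdot\rangle$ its background Hilbert inner product, and for $Z\in\Hb$ by $J_Z\colon\Ha\to\Ha$ the associated H-type endomorphism, so that $\langle[X,Y],Z\rangle=\langle J_ZX,Y\rangle$ and hence $[X,J_ZX]=|X|^2\,Z$ for all $X\in\Ha$ and $Z\in\Hb$. Since $\sigma$ is graded it splits as $\sigma=\sigma_{\Ha}\oplus\sigma_{\Hb}$, and, being a nondegenerate weak Riemannian metric, it satisfies $\sigma_{\Hb}(Z_0,Z_0)>0$ whenever $Z_0\neq0$. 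Under the hypothesis that the geodesic distance of $\sigma$ vanishes between two distinct points, the plan is to exhibit one fixed $Z_0\in\Hb\setminus\{0\}$ together with a sequence of $2$-planes $P_n$ at the unit element on which the sectional curvature is well defined and diverges to $+\infty$.

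First I would invoke the characterization of degenerate geodesic distances established in the part of the paper devoted to that distance: its vanishing forces the existence of $Z_0\in\Hb$ with $|Z_0|=1$ and of a sequence $(X_n)\subset\Ha$ with $|X_n|=1$ such that $\sigma_{\Ha}(X_n,X_n)\to0$ and $\sigma_{\Ha}(J_{Z_0}X_n,J_{Z_0}X_n)\to0$. This is the step where the infinite dimensional sub-Riemannian structure really enters: a horizontal loop traced in the plane $\spn\{X_n,J_{Z_0}X_n\}$ has $\sigma$-length of order $\sigma_{\Ha}(X_n,X_n)^{1/2}+\sigma_{\Ha}(J_{Z_0}X_n,J_{Z_0}X_n)^{1/2}$ and produces a central holonomy proportional to $[X_n,J_{Z_0}X_n]=Z_0$, so degeneracy of the geodesic distance amounts precisely to the possibility of making these two $\sigma$-lengths arbitrarily small. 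Converting this assertion about infima of curve lengths into a single sequence of horizontal vectors will, I expect, be the hard part of the argument.

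Granting it, I would put $Y_n:=J_{Z_0}X_n$, so that $[X_n,Y_n]=Z_0$ and, in particular, $X_n$ and $Y_n$ are linearly — hence $\sigma$-linearly — independent. Passing to the $\sigma_{\Ha}$-orthogonal component
\[
\overline{Y}_n:=Y_n-\frac{\sigma(X_n,Y_n)}{\sigma(X_n,X_n)}\,X_n
\]
is harmless: one still has $[X_n,\overline{Y}_n]=[X_n,Y_n]=Z_0$ because $[X_n,X_n]=0$, while $\overline{Y}_n\neq0$ and $\sigma_{\Ha}(\overline{Y}_n,\overline{Y}_n)\le\sigma_{\Ha}(Y_n,Y_n)\to0$. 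Normalizing, I set $\widehat{X}_n:=X_n/\sigma_{\Ha}(X_n,X_n)^{1/2}$ and $\widehat{Y}_n:=\overline{Y}_n/\sigma_{\Ha}(\overline{Y}_n,\overline{Y}_n)^{1/2}$; then $\{\widehat{X}_n,\widehat{Y}_n\}$ is $\sigma$-orthonormal, spans a horizontal $2$-plane $P_n\subset\Ha$, and by bilinearity of the bracket
\[
[\widehat{X}_n,\widehat{Y}_n]=\lambda_n\,Z_0,\qquad \lambda_n:=\bigl(\sigma_{\Ha}(X_n,X_n)\,\sigma_{\Ha}(\overline{Y}_n,\overline{Y}_n)\bigr)^{-1/2}\longrightarrow+\infty .
\]

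Finally I would feed the planes $P_n$ into the sectional curvature formula of the relevant section. On a horizontal $2$-plane of a graded weak metric the only potentially ill-defined terms of the left invariant curvature expression — those built from the $\sigma$-adjoint of $\ad$ evaluated on central vectors — either drop out or recombine into the well-defined quantity $|[\widehat{X}_n,\widehat{Y}_n]|_\sigma^2$; hence the sectional curvature of $P_n$ at the unit element exists as a number even though the Levi-Civita covariant derivative need not exist globally, and a short direct computation identifies it, in the adopted sign convention, with $\tfrac34\,|[\widehat{X}_n,\widehat{Y}_n]|_\sigma^2=\tfrac34\,\lambda_n^{2}\,\sigma_{\Hb}(Z_0,Z_0)$ (alternatively one may run the argument on the mixed planes $\spn\{\widehat{X}_n,\,Z_0/\sigma_{\Hb}(Z_0,Z_0)^{1/2}\}$, whose curvature is again positive and unbounded). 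Since $\sigma_{\Hb}(Z_0,Z_0)$ is a fixed positive constant and $\lambda_n\to+\infty$, this sequence of curvatures is positive and unbounded, which is the assertion of the theorem. The two points carrying the real content are thus the characterization of degeneracy used at the start and the robustness — indeed the mere existence — of the curvature on the planes $P_n$ in the absence of a global Levi-Civita connection; everything in between is routine.
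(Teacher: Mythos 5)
Your overall architecture is the right one (degeneracy of $d_\sigma$ forces $\sigma$ to be strictly weak; strictly weak graded metrics force curvature blow-up), but the construction you give does not prove the stated conclusion. The decisive problem is the sign on your horizontal planes. For two $\sigma_0$-orthonormal vectors $x,y\in\Ha$ all four $B$-adjoint vectors exist and vanish, so Arnold's formula reduces to $K_\sigma(\Pi_{x,y})=-3\|\tfrac12[x,y]\|_{\sigma_0}^2=-\tfrac34\|[x,y]\|_{\sigma_0}^2$; this is the classical Eberlein-type computation and is what the paper obtains in Lemma~\ref{Formulecurvaturapiani}(1). There is no ``sign convention'' under which this becomes $+\tfrac34\|[x,y]\|_{\sigma_0}^2$. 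Hence your planes $P_n=\spn\{\widehat X_n,\widehat Y_n\}\subset\Ha$ give $K_\sigma(P_n)\to-\infty$: they prove the negative half of Theorem~\ref{thmintro:curvatures}, not the positive unboundedness asserted in Theorem~\ref{th:MichorMumford}. Your parenthetical fallback --- mixed planes containing a central direction --- is indeed where the $+\infty$ blow-up lives, but as written it has a gap: for a generic plane $\spn\{x,Z_0\}$ with $x\in\Ha$ the Arnold formula need not apply, because the only nontrivial $B$-adjoint vector, $B_{\sigma_0}(Z_0,x)$, exists if and only if $J_{AZ_0}x\in A_\Ha(\Ha)$ (Proposition~\ref{CaratterizzazioneTrasposto}), which fails for general $x$. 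The paper circumvents this by taking the horizontal leg of the form $J_{Az}(Av_n)$, so that $J_{Az}(J_{Az}(Av_n))=-|Az|^2Av_n\in A_\Ha(\Ha)$ automatically, and then the blow-up follows from $\|v_n\|_{\sigma_0}\ge\langle Av_n,w_n\rangle/\|w_n\|_{\sigma_0}=1/\|w_n\|_{\sigma_0}\to\infty$ where $w_n=Av_n$ is a unit vector with $\|w_n\|_{\sigma_0}\to0$ (such $w_n$ exist in $A_\Ha(\Ha)$ by density, Lemma~\ref{esistenzavettoridegeneri2}). None of these steps appear in your sketch, and they are exactly the content that makes the positive blow-up work.

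A secondary issue is the ``characterization of degeneracy'' you want to invoke at the start: the existence of unit $X_n$ with both $\sigma(X_n,X_n)\to0$ and $\sigma(J_{Z_0}X_n,J_{Z_0}X_n)\to0$ is not what the paper establishes, and it is more than is needed. The actual reduction is simpler: if $\sigma_0$ were strong, the geodesic distance would separate points (Remark~\ref{finsler:positive} together with Corollary~\ref{cor:vanishingRiemanniandistances}), so a degenerate $d_\sigma$ forces $\sigma$ to be strictly weak, which by Lemma~\ref{esistenzavettoridegeneri} already yields unit vectors $h_n\in\Ha$ with $\|h_n\|_{\sigma_0}\to0$; the $\sigma_0$-norms of $J_{Z_0}h_n$ only need to be bounded above (which follows from $\|J_{Z_0}h_n\|_{\sigma_0}\le c_0|J_{Z_0}h_n|=c_0|Z_0|$), not to tend to zero. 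You correctly flag this extraction as ``the hard part,'' but the statement you are trying to extract is both stronger than necessary and not the one the argument actually uses.
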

The starting point of the proof is that the degenerate geodesic distance forces the graded Riemannian metric to be strictly weak.
Then we prove that for strictly weak, graded Riemannian metrics the blow-up of the sectional curvature always occurs.
Extending Theorem~\ref{th:MichorMumford} to more general classes of infinite dimensional Lie groups seems an interesting open question.

It is also important to understand whether, and in which cases, the geodesic distance in a Hilbertian H-type gruoup is actually degenerate. Here a rather striking fact appears, since infinite dimensional sub-Riemannian (sub-Finsler) Geometry
enters the proof of Theorem~\ref{th:MichorMumford}. 
More generally, for {\em any} strictly weak, left invariant sub-Finsler metric on
a Hilbertian H-type group, the sub-Finsler distance is {\em degenerate}, see Theorem~\ref{thm:vanishingsubF}.
The idea behind the proof of this theorem is to use
a sequence of vectors, where the weak and the strong topology differ. Then we use the map $J$ associated with the structure of H-type group, which allows for the same ``shrinking-space effect'' that was first observed in \cite{MagTib2023}.
As a consequence, we have the following result, corresponding to Theorem~\ref{thm:vanishingFinslerdistances}.

\begin{theorem}[Characterization of points with vanishing distance]\label{thmintro:vanishingFinslerdistances}
Let $F$ be a strictly weak, left invariant Finsler metric on a Hilbertian H-type group $\Ht = \Ha \oplus \Hb$ and
let us fix $x,y \in \Ha$, $z_1 , z_2 \in \Hb$.
Then we have 
\[
d_F(x+z_1,y+z_2)=0\quad \text{if and only if}\quad x=y,
\]
where $d_F$ is the Finsler distance associated with $F$. 
\end{theorem}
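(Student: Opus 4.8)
The plan is to derive the characterization from the already-quoted Theorem~\ref{thm:vanishingsubF} (vanishing of strictly weak, left invariant sub-Finsler distances on Hilbertian H-type groups) together with the homogeneity provided by left translations. The ``if'' direction is the substantive one and should follow from the sub-Finsler result, while the ``only if'' direction should be almost immediate from the structure of the distance and the projection onto the $\Ha$-factor.

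First I would treat the ``only if'' direction. Let $\pi \colon \Ht = \Ha \oplus \Hb \to \Ha$ be the canonical projection, which is a group homomorphism onto the abelian group $\Ha$ (since the bracket takes values in $\Hb$). Because $F$ is left invariant and the restriction of the ambient norm to the horizontal layer $\Ha$ dominates, in the appropriate sense, the horizontal component of any admissible curve, the image under $\pi$ of any curve joining $x+z_1$ to $y+z_2$ is a curve in $\Ha$ joining $x$ to $y$ whose length (for the flat norm on $\Ha$) is bounded by the $F$-length of the original curve. Hence $d_F(x+z_1, y+z_2) \ge \|x-y\|_{\Ha}$, so the vanishing of $d_F$ forces $x=y$. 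I would phrase this via the fact that $\pi$ is $1$-Lipschitz from $(\Ht, d_F)$ to $(\Ha, \|\cdot\|)$, which is where left invariance and the graded structure of $F$ enter.

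For the ``if'' direction, assume $x=y$; then left translating by $-x$ (an $F$-isometry) reduces to showing $d_F(z_1', z_2') = 0$ for the translated central elements $z_1', z_2' \in \Hb$. Now the key point is that connecting two purely central points can be done using only horizontal directions: the bracket-generating property of the H-type structure (the map $J$ being surjective onto $\Hb$ in the relevant sense) means that for any $z \in \Hb$ there are horizontal loops based at $0$ whose holonomy in the center is $z$, and by rescaling such loops one sees that the sub-Finsler distance from $0$ to $z$ within the horizontal distribution controls $d_F(0,z)$ from above. Then I would invoke Theorem~\ref{thm:vanishingsubF}: the sub-Finsler distance associated with the restriction of $F$ to the horizontal layer is degenerate, hence $d_F(0, z_2'-z_1')=0$, and composing horizontal loops gives $d_F(z_1', z_2') \le d_F(0, z_2' - z_1') = 0$. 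The ``shrinking-space effect'' via the map $J$, already used to prove Theorem~\ref{thm:vanishingsubF}, is exactly what produces the short horizontal connectors.

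**The main obstacle** I anticipate is making precise the comparison between $d_F$ (a Finsler distance allowing arbitrary tangent directions, including central ones) and the purely horizontal sub-Finsler distance, so that Theorem~\ref{thm:vanishingsubF} can be applied as a black box. One has to check that moving in central directions never helps to connect two points of $\Ht$ with the same horizontal part more efficiently than staying horizontal — equivalently, that the sub-Finsler distance obtained by forbidding central moves is an upper bound for $d_F$ on pairs differing only in $\Hb$. This is intuitively clear because central directions contribute positively to length while contributing nothing to the $\Ha$-displacement that the projection argument already pins down, but turning it into a clean argument requires a careful concatenation-of-curves construction together with the left invariance and gradedness of $F$. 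Everything else — the projection estimate and the final assembly — should be routine once this comparison is in hand.
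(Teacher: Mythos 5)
Your overall route is the same as the paper's: bound $d_F$ from below via the projection onto $\Ha$ for the ``only if'' direction, and bound it from above by the horizontal sub-Finsler distance $\rho_S$ associated with $S_0=F_0|_\Ha$ so that Theorem~\ref{thm:vanishingsubF} gives the ``if'' direction. One remark before the main issue: the comparison you single out as ``the main obstacle'' is in fact immediate and needs no concatenation construction. Horizontal curves form a subfamily of all admissible curves, and $\ell_F(\gamma)=\ell_S(\gamma)$ for every horizontal $\gamma$, since by \eqref{eq:dLp}--\eqref{eq:betahoriz} the central component of $(dL_{-\gamma(t)})_{\gamma(t)}\dot\gamma(t)$ vanishes along horizontal curves; hence the infimum defining $d_F$ runs over a larger family and $d_F\le\rho_S$ pointwise. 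This is exactly Remark~\ref{subfinsler:positive}.

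The genuine gap is elsewhere: you invoke Theorem~\ref{thm:vanishingsubF} by asserting that ``the sub-Finsler distance associated with the restriction of $F$ to the horizontal layer is degenerate,'' but that theorem requires $S_0=F_0|_\Ha$ to be \emph{strictly weak on $\Ha$}, which is not among your hypotheses. You only know that $F_0$ is strictly weak on all of $\Ht$, i.e.\ there exist Hilbert-unit vectors $u_n\in\Ht$ with $F_0(u_n)\to0$, and a priori this degeneracy could be carried entirely by the $\Hb$-components. One must show that the $u_n$ can be replaced by unit vectors lying in $\Ha$; this is Lemma~\ref{esistenzavettoridegeneri} in the paper, and its proof uses both the $F_0$-continuity of the projections and the finite-dimensionality of $\Hb$ (so that $F_0(\pi_2(u_n))\to0$ forces $|\pi_2(u_n)|\to0$, whence $|\pi_1(u_n)|\to1$ and $F_0(\pi_1(u_n))\to0$). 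Without this step the reduction to Theorem~\ref{thm:vanishingsubF} does not go through. A smaller slip: in the ``only if'' direction, the lower bound $d_F(x+z_1,y+z_2)\ge\|x-y\|_{\Ha}$ in the Hilbert norm is false for a strictly weak metric; the correct estimate (Remark~\ref{finsler:positive}) is $C\,d_F(x+z_1,y+z_2)\ge F_0(x-y)$, which still yields $x=y$ because $F_0$ is a norm.
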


The subclass of strictly weak, graded Riemannian metrics on a Hilbertian H-type group gives rise to another singular phenomenon, i.e. the lack of the Levi-Civita covariant derivative.

\begin{theorem}\label{thmintro:LeviCivita}
	If $\sigma$ is a strictly weak, graded Riemannian metric on a Hilbertian H-type group, then 
it does not admit the Levi-Civita covariant derivative.
\end{theorem}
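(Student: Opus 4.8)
The plan is to argue by contradiction through the Koszul identity restricted to left-invariant vector fields. Suppose $\sigma$ admitted a Levi-Civita covariant derivative $\nabla$. Since a weak Riemannian metric is non-degenerate, $\nabla$ is unique, and since $\sigma$ is left invariant, $\nabla$ is left invariant as well; evaluating on left-invariant fields, where $\sigma(Y,Z)$ is constant, metric compatibility and torsion-freeness give
\[
2\,\sigma(\nabla_X Y,Z)=\sigma([X,Y],Z)-\sigma([X,Z],Y)-\sigma([Y,Z],X),\qquad X,Y,Z\in\Ht .
\]
Writing $\langle\cdot,\cdot\rangle$ for the background Hilbert product, set $\sigma|_{\Ha}=\langle A\,\cdot\,,\cdot\rangle$ and $\sigma|_{\Hb}=\langle B\,\cdot\,,\cdot\rangle$, where $A,B$ are bounded, positive, self-adjoint and injective. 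Since the centre $\Hb$ of a Hilbertian H-type group is finite dimensional, $\sigma|_{\Hb}$ is automatically strong, so the strict weakness of $\sigma$ forces $A$ not to be boundedly invertible; hence $\im A\subsetneq\Ha$ and even $\im A^{1/2}\subsetneq\Ha$.

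The key step is to feed the identity with $X\in\Ha$, $Y\in\Hb$ and let $Z$ range over $\Ha$. Because $\Hb$ is central one has $[\Ha,\Hb]=\{0\}$, and the bracket on $\Ha$ is governed by the H-type map through $\langle[a,a'],z\rangle=\langle J_z a,a'\rangle$; the three terms on the right collapse to $-\langle J_{BY}X,Z\rangle$, while taking instead $Z\in\Hb$ yields $\pi_{\Hb}\nabla_X Y=0$. Therefore the existence of the tangent vector $\nabla_X Y$ would force the linear functional $Z\mapsto\langle J_{BY}X,Z\rangle$ on $\Ha$ to be continuous for the seminorm $Z\mapsto\sigma(Z,Z)^{1/2}=|A^{1/2}Z|$, which holds if and only if $J_{BY}X\in\im A^{1/2}$.

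To close the contradiction I would fix $z_0\in\Hb$ with $Bz_0\neq 0$ — possible since $B$ is injective and $\Hb\neq\{0\}$ — and use the H-type relation $J_{Bz_0}^{2}=-|Bz_0|^{2}\Id$, which makes $J_{Bz_0}$ an invertible scalar multiple of an isometry of $\Ha$, so that $J_{Bz_0}\Ha=\Ha$. Choosing $v\in\Ha\setminus\im A^{1/2}$ — which exists precisely because $\sigma|_{\Ha}$ is strictly weak — and setting $X:=J_{Bz_0}^{-1}v$ together with $Y$ the left-invariant field having $Y(e)=z_0$, one gets $J_{BY}X=v\notin\im A^{1/2}$, so the above functional is $\sigma$-discontinuous and can be represented by no tangent vector. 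This contradicts the Koszul identity, and therefore $\sigma$ admits no Levi-Civita covariant derivative. As a by-product, the same computation determines $\nabla$ completely when $\sigma|_{\Ha}$ is strong, so that a graded metric on a Hilbertian H-type group possesses the Levi-Civita covariant derivative exactly when its horizontal part is strong.

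The step I expect to be the main obstacle is the opening reduction: a priori $\nabla_X Y$ is only assumed to be a tangent vector, so one must first establish uniqueness and left invariance of $\nabla$ — which rely solely on non-degeneracy and left invariance of $\sigma$ — before the Koszul identity is meaningful. A further point demanding care is that the contradiction is run at the level of $\sigma$-continuity of the Koszul functional, through $\im A^{1/2}$, rather than at the level of representability within $\Ha$, through $\im A$: this keeps the argument insensitive to which Hilbert space one allows $\nabla_X Y$ to inhabit, and it is also the formulation that dovetails with Theorem~\ref{th:MichorMumford}, where the same strictly weak, graded metrics are shown to make the sectional curvature blow up.
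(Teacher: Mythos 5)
Your proposal is correct and follows essentially the same route as the paper: both derive from torsion-freeness and metric compatibility (your Koszul identity is the polarized form of the paper's computation of $\nabla_{\kI_x}\kI_x(0)$) that the existence of $\nabla$ forces $J_{Az}x\in\im A$ for the horizontal part $A=A_\Ha$ of the metric operator, and both then contradict this by using $J_{Az}^2=-|Az|^2\Id_\Ha$ to map onto a vector outside the (non-surjective) range. The only cosmetic differences are that you isolate the cross term $\nabla_XY$ with $X\in\Ha$, $Y\in\Hb$ instead of working with $\nabla_XX$ for mixed $X$, and that you run the obstruction through $\sigma$-continuity and $\im A^{1/2}$ rather than direct representability and $\im A$ --- a slightly more robust but equivalent formulation.
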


The proof of the previous theorem is given in Section~\ref{sec:nonexLC}.
An example of nonexistence of the Levi-Civita connection was provided in \cite{BBM2014}. We also notice that in \cite[Example~2.26]{Maor2022Notes} the model of \cite{MagTib2020} is extended to a family of weak Riemannian metrics which do not possess Christoffel symbols, hence their associated 
Levi-Civita covariant derivative cannot exist.

Despite Theorem~\ref{thmintro:LeviCivita}, we observe that through the classical Arnold's formula \cite{Arn66} it is still possible to compute the sectional curvature of 
some planes in a Hilbertian H-type group. 
However, we cannot claim that the formula works for all planes.
In fact, there are also planes for which the Arnold's formula does not apply, as it is shown for instance in \cite[Remark~4.1]{MagTib2020}. 

The next theorem proves that in a Hilbertian H-type group equipped with a strictly weak, graded Riemannian metric, we can always find sequences of planes in $T_0\Ht$ where the sectional curvature is well defined, and also unbounded.

\begin{theorem}[Unboundedness of the sectional curvature]\label{thmintro:curvatures}
Let $\Ht$ be a Hilbertian H-type group.
If $\sigma$ is a strictly weak, graded Riemannian metric on $\Ht$, then there exist two sequences of planes 
$\{P_n\}_{n \in \N} , \{Q_n\}_{n \in \N} \subset T_0 \Ht$ 
whose sectional curvatures $K_{\sigma}(P_n)$ and $K_{\sigma}(Q_n)$ are well defined through the Arnold's formula and we have 
	$$
	\lim_{n \to \infty} K_{\sigma} (P_n) = - \infty
	\quad \text{   and   } \quad 
	\lim_{n \to \infty} K_{\sigma} (Q_n) = + \infty.
	$$
\end{theorem}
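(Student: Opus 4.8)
The plan is to work in the Lie algebra $\mathfrak{g} = \Ha \oplus \Hb$ of $\Ht$ and to build the two sequences of planes explicitly, using the H-type bracket relations together with the fact that $\sigma$ is \emph{strictly} weak. The starting observation is that since $\sigma$ is graded and strictly weak, the inner product it induces on $\Ha$ (or on $\Hb$) is strictly weaker than the Hilbert norm; hence one can extract a sequence of unit vectors $X_n \in \Ha$ (unit for the Hilbert norm) with $\sigma(X_n,X_n) \to 0$, and after normalizing, a sequence with $\sigma$-norm $1$ whose Hilbert norm blows up. The H-type condition says that for each unit $Z \in \Hb$ the map $J_Z \colon \Ha \to \Ha$ defined by $\sigma(J_Z X, X') = \sigma(Z, [X,X'])$ is an orthogonal transformation (for the relevant inner product); in particular $[X, J_Z X]$ is a nonzero element of $\Hb$ of controlled size. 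The planes will be $P_n = \mathrm{span}\{X_n, J_{Z}X_n\}$ for a fixed $Z$ and $Q_n = \mathrm{span}\{X_n, W_n\}$ for a cleverly chosen $W_n$, or some variant producing opposite signs.

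The second step is to feed these planes into Arnold's formula. First I would verify that Arnold's formula is \emph{applicable} on each $P_n$ and $Q_n$: this is the delicate point flagged in the text (Arnold's formula can fail on some planes in the strictly weak setting), so one must check that the relevant $\mathrm{ad}$-transposes and the curvature expression are well defined on the chosen two-dimensional subspaces. In a two-step group the Arnold/Milnor curvature formula simplifies dramatically because most iterated brackets vanish: the sectional curvature of a plane spanned by orthonormal $U,V$ reduces to an expression of the shape $-\tfrac34\,\sigma([U,V],[U,V]) + (\text{terms involving } \sigma([U,V],\cdot) \text{ and the adjoint})$, plus contributions when $U,V$ have components in $\Hb$ (where brackets die). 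I would compute this for $P_n$: with $U = X_n/\|X_n\|_\sigma$ horizontal and $V$ proportional to $J_Z X_n$, the dominant term is negative and, crucially, its magnitude is governed by quantities that, once re-expressed via the H-type relations and the blow-up of the Hilbert norm of $X_n$, tend to $+\infty$ in absolute value with a definite sign. This yields $K_\sigma(P_n) \to -\infty$. For $Q_n$ I would instead exploit a plane where one generator is taken in $\Hb$ (or a mixed combination) so that the negative bracket-squared term is absent or dominated, and the surviving adjoint terms, controlled again by the diverging Hilbert norm, force $K_\sigma(Q_n) \to +\infty$; this is exactly the sign pattern one expects from the finite-dimensional H-type picture, where horizontal planes have curvature $-3/4$ and vertical-ish planes have positive curvature, rescaled here by the diverging factor coming from the weak metric.

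The third step is bookkeeping: collect the estimates, confirm the chosen vectors are genuinely linearly independent and span honest two-planes (nondegeneracy of $\sigma$ on each $P_n,Q_n$, which holds since $\sigma$ is a weak metric, i.e.\ positive definite), and confirm the limits. I would also reconcile this with Theorem~\ref{th:MichorMumford}: that theorem asserts positive unboundedness under a degenerate geodesic distance, and here the $Q_n$ family provides exactly the $+\infty$ sequence, while $P_n$ shows the curvature is also unbounded below, so there is no contradiction — unboundedness occurs on both sides.

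The main obstacle I anticipate is \textbf{Step 2}, namely verifying that Arnold's formula actually applies to the specific sequences of planes and then extracting the sharp asymptotics with the correct sign. The applicability issue is genuine in strictly weak metrics — the formula presupposes the existence of certain $\sigma$-adjoints of $\mathrm{ad}$-operators restricted to the plane — so the construction of $X_n$, $J_Z X_n$, and $W_n$ must be done carefully enough that these adjoints exist on the relevant finite-dimensional subspaces (likely by choosing the sequences inside a fixed finite-dimensional H-type sub-bracket-generating subalgebra, where everything is classical, and only the \emph{metric} is the restriction of the weak one). Once inside such a finite-dimensional reduction, the curvature computation is the standard two-step Milnor-type formula, and the divergence is driven purely by $\|X_n\|_{\text{Hilbert}} \to \infty$ while $\|X_n\|_\sigma = 1$; isolating the leading term and its sign in each of the two families is then a finite, if slightly intricate, calculation.
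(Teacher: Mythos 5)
Your overall strategy matches the paper's: take Hilbert-unit vectors $w_n\in\Ha$ with $\|w_n\|_{\sigma_0}\to 0$, use the horizontal planes $\spn\{w_n,J_zw_n\}$ (curvature $-\tfrac34\|[\cdot,\cdot]\|_{\sigma_0}^2$, driven to $-\infty$) and ``vertical'' planes containing $z\in\Hb$ (positive curvature driven to $+\infty$). However, the step you yourself flag as the main obstacle — the applicability of Arnold's formula — is exactly where your proposal has a genuine gap, and the resolution you sketch would not work. The $B$-adjoint vector $B_{\sigma_0}(y,x)$ is defined by the duality \eqref{eq:adT} against \emph{all} $z\in\Ht$, so its existence cannot be arranged by restricting to a finite-dimensional subalgebra: the sectional curvature of a plane in $\Ht$ is not the curvature of that plane inside some finite-dimensional subgroup, and the adjoint must exist relative to the whole space. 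The paper handles this via Proposition~\ref{CaratterizzazioneTrasposto}: $B_{\sigma_0}(y,x)$ exists iff $J_{Ay_2}x_1\in A_\Ha(\Ha)$, where $A$ is the (non-surjective, since $\sigma$ is strictly weak) operator with $\sigma_0(v,w)=\langle v,Aw\rangle$. For purely horizontal planes all $B$-adjoints vanish, so your $P_n$ family is fine; but for the positive-curvature family the horizontal generator must be chosen of the special form $J_{Az}(Av_n)$, so that $J_{Az}(J_{Az}(Av_n))=-|Az|^2Av_n$ lands in $A_\Ha(\Ha)$. This in turn forces the degenerating sequence to be taken inside $A_\Ha(\Ha)$, which requires a separate density argument (Lemma~\ref{esistenzavettoridegeneri2} via Proposition~\ref{ImmagineOperatoreAssociatoMetrica}). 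None of this is in your proposal, and a generic choice of $W_n$ would leave the curvature of $Q_n$ undefined.

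Two further points. First, you define $J_Z$ by $\sigma(J_ZX,X')=\sigma(Z,[X,X'])$, but in the paper $J$ is defined through the ambient Hilbert product, not through $\sigma_0$; the H-type identity $J_z^2=-|z|^2\Id_\Ha$ holds only for that version, and the whole computation lives in the interplay between the two inner products mediated by $A$. Conflating them hides the mechanism that produces the divergence. Second, the quantitative source of $K_\sigma(Q_n)\to+\infty$ is not merely ``the Hilbert norm blows up'': writing $w_n=Av_n$ with $|w_n|=1$, one needs the lower bound $\|v_n\|_{\sigma_0}\ge \langle Av_n,w_n\rangle/\|w_n\|_{\sigma_0}=1/\|w_n\|_{\sigma_0}\to+\infty$, which again depends on having $w_n$ in the image of $A$. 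So the skeleton is right, but the decisive choices of generators and the verification that Arnold's formula applies are missing, and your proposed finite-dimensional reduction does not supply them.
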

This theorem is a version of Theorem~\ref{thm:unboundedcurvature}, where
we provide an explicit form for the planes: 
\[
P_n=\spn\big\{w_n,J_zw_n\big\} \quad \text{and}\quad 
Q_n=\spn\big\{z,J_{Az}w_n\big\}.
\]
We believe that there is a connection between these planes and the sequence of curves that progressively decrease their length, while connecting the fixed distinct points. 
In fact, we point out that the projection on $\Ha$
of these {\em horizontal curves} has the form
\[
\gamma_1^n(t)= tc\sqrt{n}\,w_n+\frac{t^2c}{2}\frac{1}{\sqrt{n}}\,J_z(w_n)
\]
for $c\in\R$, see the proof of 
Theorem~\ref{thm:unboundedcurvature}.
In this sense, we surmise that the planes $P_n$ and $Q_n$ should be somehow related to the parts of the space where the curves $\gamma^n$ "move", when their length reduces up to converging to zero. However, the precise relationship between the planes of the blow-up and the shrinking curves remains unclear to us. 

We observe that Theorem~\ref{thmintro:curvatures} immediately gives Theorem~\ref{th:MichorMumford},
since the vanishing of the geodesic distance implies that the graded Riemannian metric is strictly weak. We hopefully expect that our remarks in Hilbertian H-type groups provide more insights to understand the Michor--Mumford phenomenon in other classes of infinite dimensional manifolds.

\section{Infinite dimensional H-type groups}\label{sect:Htype}

The approach of \cite{MagRaj2014} can be used to construct specific classes of infinite dimensional Banach nilpotent Lie groups, starting from an infinite dimensional nilpotent Lie algebra. Indeed, the group operation is immediately provided by the Baker--Campbell--Hausdorff formula,
which we abbreviate as ``BCH formula''. We will see that this simple viewpoint allows us to get the notion of a possibly infinite dimensional H-type group.

We fix some notions that we will use throughout the paper.
Let $\Ht$ be a Hilbert space, consider a continuous Lie product
$[\cdot,\cdot]:\Ht\times \Ht\to\Ht$ 
and two orthogonal and nontrivial closed subspaces
$\Ha$ and $\Hb$ such that 
$\Ht=\Ha\oplus\Hb$, with $\dim(\Hb)<+\infty$.
We denote the scalar product on $\Ht$ by $\lan\cdot,\cdot\ran$ and the 
associated norm by $|\cdot|$. 
The space of all linear continuous
endomorphisms of a Banach space $X$ 
is denoted by $\End(X)$.

We say that $\Ht$ is a {\em Hilbertian H-type group}, or simply an {\em H-type group}, if 
the following conditions hold.
\begin{enumerate}
	\item[(I)] 
	$[\Ht,\Ht]\subset \Hb$ and   $[\Ht,\Hb]=\set{0}$,
	\item [(II)]
	the unique linear and continuous operator
	$J:\Hb\to \End(\Ha)$
	defined by the formula
	\beq\label{eq:J_zxy}
	\langle J_zx,y\rangle=\ban{z,[x,y]}
	\eeq
	for $z\in \Hb$, $x,y\in\Ha$, satisfies the additional condition 
	\begin{equation}\label{eq:J_z2}
		J_z^2=-|z|^2\Id_\Ha,
	\end{equation}
where $\Id_\Ha:\Ha\to\Ha$ is the identity mapping.
\end{enumerate}
Notice that the existence of the linear and continuous operator $J$ is a consequence of both the Riesz representation theorem applied on $\Ha$ and 
the continuity of the bilinear mapping $[\cdot,\cdot]$. Thus, \eqref{eq:J_zxy}
immediately follows.

The group operation is automatically obtained
by the BCH formula:
\begin{equation}\label{eq:GroupOp}
	x\cdot y
	=x+y+\frac{1}{2}[x,y].
\end{equation}
From the defining formula \eqref{eq:J_zxy}, we immediately notice that the adjoint operator $J_z^*$ satisfies
\beq
J_z^*=-J_z.
\eeq
As a consequence, using also \eqref{eq:J_z2}, we may write
\[
|J_zx|^2=-\lan x,J_z^2x\ran=|z|^2|x|^2,
\]
that gives 
\beq\label{eq:J_znorm}
|J_zx|=|z||x|.
\eeq
Therefore, using also the defining formula \eqref{eq:J_zxy}, we have
\beq\label{eq:z^2x^2}
|z|^2|x|^2=|J_zx|^2=\lan z,[x,J_zx]\ran.
\eeq
For every $w\in \Hb$, we also notice that \eqref{eq:J_znorm} implies
\[
|\lan w,[x,y]\ran|=|\lan J_wx,y\ran|\le |w|\,|x|\,|y|,
\]
therefore in any H-type group we have 
\beq\label{eq:[]1}
|[x,y]|\le |x|\,|y|.
\eeq
For $x,z\neq0$, it follows that
\beq
\Big|\frac{[x,J_zx]}{|x|^2|z|}\Big|\le 1
\eeq
and in addition \eqref{eq:z^2x^2} gives
\beq\label{eq:par1}
\ban{\frac{z}{|z|},\frac{[x,J_zx]}{|x|^2|z|}}=1.
\eeq
Combining \eqref{eq:[]1} and \eqref{eq:par1},
we have proved that 
\beq\label{eq:xJ_z}
[x,J_zx]=|x|^2z.
\eeq
Notice that $[\Ht,\Hb]=\set{0}$ implies that $\Hb$ is contained in the center of $\Ht$, where we regard $\Ht$
as a Lie algebra. 
However, it is easy to notice that condition \eqref{eq:xJ_z} shows that 
$\Hb$ exactly coincides with the center of $\Ht$.

\begin{remark}\em 
Notice that in the case $\dim(\Ha)<+\infty$, the Hilbertian H-type
group coincides with the well known (finite dimensional) H-type group, \cite{Kaplan1980}, hence motivating our terminology.
\end{remark}

Next, we construct examples of (infinite dimensional) Hilbertian H-type groups. We fix an H-type group $\cn=\upsilon\oplus\zeta$,
where $\upsilon$ and $\zeta$ are finite dimensional
orthogonal subpaces of the Hilbert space $\cn$.
We denote by $\ban{\cdot,\cdot}_\cn$ the scalar product of $\cn$ and by $|\cdot|_\cn$ its associated norm.

The endomorphism $J^\cn:\zeta\to\End(\upsilon)$
defines the H-type structure on $\cn$. 
We denote by $\N^+$ the set of positive integers and consider the space of square-summable sequences
\beq
\Pds_\upsilon=\set{(x_k)_k:\,x_k\in \upsilon, k\in\N^+,\sum_{k=1}^\infty |x_k|_\cn^2<+\infty}.
\eeq 

We set $\Ht=\Pds_\upsilon\times \zeta$ and identifying $\Pds_\upsilon$ and $\zeta$ with $\Pds_\upsilon\times\set{0}$ and $\set{0}\times\zeta$, respectively, we can also write 
\[
\Ht=\Pds_\upsilon\oplus\zeta.
\]
For $(x,z),(x',z')\in\Ht$, we define the scalar product
\beq\label{eq:scalar}
\lan(x,z),(x',z')\ran= \lan ((x_k)_k,z),((x'_k)_k,z')\ran=
\ban{z,z'}_\cn+\sum_{k=1}^\infty \lan x_k,x'_k\ran_\cn
\eeq
that makes $\Ht$ a Hilbert space,
where $\Pds_\upsilon$ and $\zeta$ 
are orthogonal closed subspaces.
We denote by $|\cdot|$ the associated norm on $\Ht$.
For $x=(x_k)_k\in\Pds_\upsilon$ and $z\in\zeta$, we define
\beq\label{eq:defJ_z}
J_z(x)=(J^\cn_zx_k)_k.
\eeq
Thus, observing that 
\beq\label{eq:J_z^2norm}
\sum_{k=1}^\infty |J^\cn_zx_k|_\cn^2=|z|_\cn^2
\sum_{k=1}^\infty |x_k|_\cn^2<+\infty,
\eeq
the mapping $J_z:\Pds_\upsilon\to\Pds_\upsilon$ is well defined and 
\beq 
J_z^2=-|z|_\cn^2\Id_{\Pds_\upsilon},
\eeq
since $(J^\cn_z)^2=-|z|_\cn^2\Id_\upsilon $.
The Lie product of $\xi+\eta,\xi'+\eta'\in\cn=\upsilon\oplus\zeta $ 
is given by a skew-symmetric continuous bilinear mapping
\[
\beta:\upsilon\times \upsilon\to \zeta
\]
such that
\[
[\xi+\eta,\xi'+\eta']=\beta(\xi,\xi').
\]
By the property \eqref{eq:[]1} for H-type groups, we get
\beq
|\beta(\xi,\xi')|=|[\xi,\xi']|\le |\xi|_\cn\,|\xi'|_\cn
\eeq
for all $\xi,\xi'\in\upsilon$, therefore
the Lie product 
\beq\label{eq:LieHtype}
[(x,z),(x',z')] 
= 
\left(0,\sum_{k=1}^{+\infty} \beta(x_k,x'_k)
\right),
\eeq
is well defined for all $(x,z),(x',z')\in\Ht$.
Cauchy--Schwarz inequality yields
\beq
|[(x,z),(x',z')]|\le 
\sum_{k=1}^{+\infty} |\beta(x_k,x'_k)|\le 
\sum_{k=1}^\infty |x_k|_\cn\,|x_k'|_\cn\le |x|\,|x'|,
\eeq
hence the Lie product $[\cdot,\cdot]$ is continuous
on $\Ht$. 
Finally, from definition \eqref{eq:defJ_z} of $J_z:\Pds_\upsilon\to\Pds_\upsilon$, we obtain
\[
\ban{J_zx,y}=\sum_{k=1}^\infty \lan J^\cn_zx_k,y_k\ran_\cn
=\sum_{k=1}^\infty \lan z,[x_k,y_k]\ran_\cn
=\sum_{k=1}^\infty \lan z,\beta(x_k,y_k)\ran_\cn
=\ban{z,[x,y]}
\]
for all $x,y\in\Pds_\upsilon$ and $z\in\zeta$. 
We have proved the following result.

\begin{theorem}\label{th:ExHtype}
	The linear space $\Ht=\Pds_\upsilon\oplus\zeta$ 
	equipped with scalar product \eqref{eq:scalar},
	Lie product \eqref{eq:LieHtype} and
	linear operator \eqref{eq:defJ_z} is an infinite dimensional H-type group.
\end{theorem}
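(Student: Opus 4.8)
The plan is to check, in turn, each clause in the definition of a Hilbertian H-type group for the triple $(\Ht,[\cdot,\cdot],J)$ built above; essentially every verification has already been assembled in the discussion preceding the statement, so the proof amounts to collecting these facts in the right order.

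First I would settle the ambient Hilbert-space structure. Completeness of $\Pds_\upsilon$ with the scalar product induced by \eqref{eq:scalar} is the classical $\ell^2$ argument, $\zeta$ is finite dimensional, and $\zeta \perp \Pds_\upsilon$ is built into \eqref{eq:scalar}; hence $\Ht = \Pds_\upsilon \oplus \zeta$ is a Hilbert space, $\Pds_\upsilon$ and $\zeta$ are orthogonal closed subspaces, both nontrivial, and $\dim(\zeta) < +\infty$. I would also record that for $z \in \zeta$ one has $|z| = |z|_\cn$ and that each copy of $\upsilon$ sits isometrically inside $\Pds_\upsilon$, so the two norms $|\cdot|$ and $|\cdot|_\cn$ agree wherever both make sense; this is what makes the scalar identity \eqref{eq:J_z2} match the construction.

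Next I would handle the Lie product. Skew-symmetry of $\beta$ and the H-type inequality \eqref{eq:[]1} applied in $\cn$, combined with Cauchy--Schwarz, give $\sum_k |\beta(x_k, x_k')| \le \sum_k |x_k|_\cn |x_k'|_\cn \le |x|\,|x'|$, so \eqref{eq:LieHtype} is well defined and, by the same bound, $[\cdot,\cdot]$ is continuous on $\Ht$; bilinearity and skew-symmetry pass termwise from $\beta$, and since the right-hand side of \eqref{eq:LieHtype} always lies in $\{0\}\times\zeta$, the bracket is two-step nilpotent, so the Jacobi identity holds trivially and $[\cdot,\cdot]$ is a genuine continuous Lie product. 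This also yields condition (I): $[\Ht,\Ht] \subset \zeta$ from the shape of \eqref{eq:LieHtype}, and $[\Ht,\zeta] = \{0\}$ because that right-hand side depends only on the $\Pds_\upsilon$-components of its arguments.

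Finally I would verify condition (II). By \eqref{eq:J_z^2norm} the map $J_z$ of \eqref{eq:defJ_z} sends $\Pds_\upsilon$ into itself, it is linear in $x$, and $|J_z x| = |z|_\cn |x|$, so $J_z \in \End(\Pds_\upsilon)$ and $z \mapsto J_z$ is linear and continuous; the termwise computation displayed just before the statement shows $\langle J_z x, y\rangle = \langle z,[x,y]\rangle$, so by Riesz representation on $\Pds_\upsilon$ this $J_z$ is precisely the operator characterized in (II). The identity $J_z^2 = -|z|_\cn^2 \Id_{\Pds_\upsilon} = -|z|^2 \Id_{\Pds_\upsilon}$ then follows termwise from $(J^\cn_z)^2 = -|z|_\cn^2 \Id_\upsilon$, which is the H-type identity for $\cn$. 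I do not expect a genuine obstacle here: the construction was designed so that every axiom reduces either to a termwise consequence of the finite dimensional H-type group $\cn$ or to an $\ell^2$/Cauchy--Schwarz convergence argument, and the only point requiring real attention is the consistent bookkeeping of the norms $|\cdot|$ and $|\cdot|_\cn$, so that \eqref{eq:J_z2} is read with the correct norm.
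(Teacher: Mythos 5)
Your proposal is correct and follows essentially the same route as the paper, whose proof of Theorem~\ref{th:ExHtype} is precisely the chain of verifications assembled in the construction preceding the statement: the summability estimate \eqref{eq:J_z^2norm} for $J_z$, the Cauchy--Schwarz bound giving well-definedness and continuity of \eqref{eq:LieHtype}, and the termwise identity $\langle J_zx,y\rangle=\langle z,[x,y]\rangle$. The only additions you make (completeness of $\Pds_\upsilon$, the trivial Jacobi identity from two-step nilpotency, and the norm bookkeeping) are routine points the paper leaves implicit, so there is no substantive difference.
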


\begin{remark}\label{rem:infiniteHtype}\em 
	By \cite[Corollary~1]{Kaplan1980}, there exist infinitely many finite dimensional H-type groups, where there are no isomorphic couples. Indeed, these groups can be chosen to have centers of different dimensions. As a result, Theorem~\ref{th:ExHtype} 
	also shows that there are infinitely many infinite dimensional 
	H-type groups.
\end{remark}

\begin{remark}\em 
	We point out that, when the finite dimensional H-type group $\cn$ coincides with the 3-dimensional Heisenberg group, in the construction of $\Ht$, then Theorem~\ref{th:ExHtype} yields the infinite dimensional
	Heisenberg group studied in \cite{MagTib2023}.
\end{remark}


\section{Weak metrics on Hilbertian H-type groups}\label{section:weakmetrics}

In the sequel, $\Ht=\Ha\oplus\Hb$ always denotes a Hilbertian H-type group,
equipped with a scalar product $\langle\cdot,\cdot\rangle$ and its Hilbertian norm $|\cdot|$. 
This section presents various notions of weak metrics on $\Ht$. 
They include weak Finsler metrics and weak Riemannian metrics.
Indeed, both of these metrics may induce a topology which is strictly weaker than the manifold topology.
We will also follow the convention of identifying the tangent space $T_q\Ht$ with
the group itself $\Ht$, $q\in\Ht$, due to the linear structure of $\Ht$.

For every $p\in\Ht$,
the {\em left multiplication by $p$} is denoted by $L_p:\Hla\to\Hla$, with
\[
L_p(q)=p\cdot q=p+q+\frac12[p,q]
\]
for all $q\in\Ht$. We define the skew-symmetric bilinear function $\beta:\Ha\times\Ha\to \Hb$ such that
\[
[x,y]=\beta(x,y)
\]
for every $x,y\in\Ha$.
By definition of $\Ht$, we have
two canonical projections
$\pi_1:\Ht\to\Ha$ and $\pi_2:\Ht\to\Hb$ such that every $p\in\Ht$ can be written in a unique way as
\[
p=\pi_1(p)+\pi_2(p)
\]
where $\pi_1(p)$ and $\pi_2(p)$ are also orthogonal. 
We obviously have the isometric isomorphism 
\[
\Ht\to \Ha\times \Hb,\quad p\to (\pi_1(p),\pi_2(p))
\]
with respect to the Hilbert structure of $\Ht$.
We use the simplified notation $p_i=\pi_i(p)$ for $p\in\Ht$, so that
we can write $p=p_1+p_2$ with $p_1\in\Ha$ and $p_2\in\Hb$.
Then the group operation \eqref{eq:GroupOp} gives a simple formula for the 
differential of $L_p$ at a point $q\in\Ht$ along $v=v_1+v_2\in\Ht$: 
\begin{equation}\label{eq:dL_p}
	{(dL_p)}_q(v)
	=\frac{d}{dt}L_p(q+tv)\Big|_{t=0}=v +\frac{1}{2}[p,v]
	=v_1+v_2 + \frac{\beta(p_1,v_1)}{2}.
\end{equation}
Indeed, the linear structure of $\Ht$ allows us to identify $T_q\Hla$ with $\Hla$.

\subsubsection{Weak Finsler metrics and Finsler distances} \label{sect:Finsler}
We fix a norm $F_0:\Ht\to [0,+\infty)$ with respect to the linear structure of $\Ht$,
which also yields a Finsler metric on $T\Ht$. 
We always assume that $F_0$ is continuous, namely
$F_0(v)\le c_1 |v|$ for some $c_1>0$ and for all $v\in\Ht$,
where $|\cdot|$ is the fixed scalar product on $\Ht$.
It is also natural to assume that the decomposition $\Ha\oplus\Hb$ 
is compatible with the Finsler norm, namely $\pi_1:\Ht\to\Ha$ and $\pi_2:\Ht\to\Hb$ 
are continuous with respect to $F_0$. 
In other words, there exists $C>0$ such that
\[
F_0(\pi_1(x))\le C F_0(x)\quad\text{and} \quad F_0(\pi_2(x))\le C F_0(x).
\]
Thus, for each $p\in\Ht$, we set
\[
F_p(v)=F_0((dL_{-p})_p(v))
\] 
for every $v\in T_p\Ht$. 
We say that the map $F$ on $T\Ht$ arising from 
the norms $F_p$ is a {\em weak, left invariant Finsler metric} on 
$T\Ht$.
We say that $F$ is a {\em strong, left invariant Finsler metric} if
the topology induced by $F_0$ on $\Ht$ coincides with the already given
Hilbert topology of $\Ht$. 
In different terms, there exist $\tilde c_1>0$ such that
$F_0(v)\ge \tilde c_1|v|$ for all $v\in\Ht$. 
If a weak, left invariant Finsler metric $F$ on $\Ht$ is not strong,
then we say that $F$ is a {\em strictly weak, left invariant Finsler metric} on $\Ht$.

 \begin{example}\em 
Let us consider the infinite dimensional Heisenberg group $\H=\ell^2\times\ell^2\times\R$ 
	equipped with the product of the associated Hilbert structure and the group operation as defined in \cite{MagTib2023}. We have $\H=\Ha\oplus\Hb$, where $\Ha=\ell^2\times\ell^2\times\set{0}$ and $\Hb=\set{0}\times\set{0}\times\R$. 
	We fix $\cp>2$ and for an element $(h,k,t)\in\Ht$, 
	we define the norm
	\[
	F_0(h,k,t)=\|h\|_\cp+\|k\|_\cp+|t|,
	\]
	where $\|x\|_\cp=(\sum_{k=1}^\infty |x_j|^\cp)^{1/\cp}\le \|x\|_2<+\infty $ for every $x\in\ell^2$. Clearly $F_0$ gives an example of 
	strictly weak, left invariant Finsler metric.
	Indeed, it is also obvious that the projections $\pi_1$ and $\pi_2$ 
	on $\Ha$ and $\Hb$ are $F_0$-continuous, respectively. 
\end{example}

The length of a continuous, piecewise smooth curve $\gamma:[0,1] \to \Ht$ 
is defined by the integral
\begin{equation}\label{def:l_F}
\ell_F(\gamma) 
=  \int_{0}^{1} 
F_{\gamma(t)} (\dot{\gamma} (t)) dt=
\int_0^1
F_0 \left (
{(dL_{-\gamma(t)})
}_{\gamma(t)}\dot{\gamma}(t) 
\right )\,dt.
\end{equation}

Then we can immediately define the associated {\em Finsler distance} 
\begin{equation}\label{defrho}
	d_F(p,q)=\inf\{ \ell_F(\gamma): \gamma \text{ is continuous, piecewise smooth, } \gamma(0)=p \text{ and } \gamma(1)=q\}
\end{equation}
for every $p,q\in\Ht$, hence $d_F: \Ht\times\Ht\to [0,+ \infty)$.
Clearly $d_F$ is left invariant, symmetric and satisfies the triangle inequality.

\begin{remark}\label{finsler:positive}\em 
	Let us consider a weak, left invariant Finsler metric $F$ on $\Ht$, 
	and let $d_F$ be the associated geodesic distance. 
	We will prove that for $p,q\in \Hla$ with $\pi_1(p)=x\neq y=\pi_1(q)$,
	we have $C\,d_F(p,q)\ge F_0(x-y) >0$.
	Indeed, for every continuous, piecewise smooth curve $\gamma :[0,1] \to \Ht$ joining $p$ to $q$, we get	
	\[
	\ell_F(\gamma)=\int_0^1
	F_0\Big((dL_{-\gamma(t)})_{\gamma(t)}
	(\dot{\gamma}(t))\Big)
	\,dt\ge\frac1C \int_0^1F_0(\dot{\gamma_1}(t))
	\,dt
	\]
	in view of \eqref{eq:dLp} and taking into account the $F$-continuity of the projections. Thus, if we consider the projected curve $\gamma_1:[0,1]\to\Ha$,
	we can piecewise integrate $\dot\gamma_1$ on the intervals where it is continuous. Then we apply \cite[Theorem 2.1.1 (ii)]{Hamilton1982} and \cite[Theorem 2.2.2]{Hamilton1982} and the triangle inequality on a partition $t_0=0<t_1<\cdots<t_k=1$.
	It follows that
	\begin{align*}
	\ell_{ F}(\gamma)&\ge\frac{1}{C}\int_0^1F_0(\dot{\gamma_1}(t))\,dt\ge
	\frac1C\sum_{j=0}^{k-1}F_0\left(\int_{t_j}^{t_{j+1}}\dot\gamma_1(t)\right)=\frac1C\sum_{j=0}^{k-1}F_0\left(\gamma_1(t_{j+1})-\gamma_1(t_j)\right) \\
&\geq\frac{F_0(x-y)}{C}>0.
	\end{align*}
\end{remark}

\subsubsection{Weak sub-Finsler metrics}
Identifying $\Ha \oplus \Hb$ with $T_0(\Ha\oplus\Hb)$, 
the subspace $\Ha$ can be seen as a closed subspace of
$T_0\Hla$, that we denote by $H_0\Hla$ and we may introduce the \textit{left invariant horizontal subbundle}, denoted by $H\Hla$, with fibers 
$$
H_p\Hla = (dL_p)_0 (H_0\Hla)\subset T_p\Hla
$$
for every $p\in\Hla$.
For each $p\in\Ht$, on the horizontal fiber $H_p\Hla$ of $H\Hla$ we can fix a norm, which turns out to be continuous and left invariant.
Precisely, a \textit{weak, left invariant sub-Finsler metric} $S$ on $H\Hla$ is defined
by a norm 
\beq\label{eq:g_0subR}
S_0: \Ha \to [0,+\infty)
\eeq
satisfying for some $c_0>0$ and for all $x \in \Ha$ the inequality 
\begin{equation}
S_0(x) \leq c_0 |x|.
\end{equation}
The previous condition immediately yields the continuity of $S_0$ with respect to the
fixed Hilbert topology on $\Ht$. Notice that the closed subspace $\Ha$ inherits a Hilbert 
structure from $\Ht$.
With the previous identifications, for every $p \in\Ht$ and $v \in H_p \Ht$, we introduce the norm
\begin{equation} \label{leftinvmetric:def}
	S_p(v)	=
	 S_0 \left ( ({dL_{-p}})_p(v) \right)
\end{equation} 
on the fiber $H_p\Hla$.
If the topology defined by the norm $S_0$ on $\Ha$ coincides with the Hilbert one of $\Ha$, 
we say that $S_0$ defines a {\em strong, left invariant sub-Finsler metric}. This is equivalent to the existence of a constant $\tilde{c} >0$ such that
 $\tilde{c} |x| \leq S_0 (x)$ for all $ x \in \Ha$.
 If this is not the case, we say that $S_0$ defines a {\em strictly weak, left invariant sub-Finsler metric}.
 
 \begin{example}\em 
 	Let us consider the infinite dimensional Heisenberg group $\H=\ell^2\times\ell^2\times\R$ 
 	equipped with the product of the associated Hilbert structure and the group operation
 	as defined in \cite{MagTib2023}. We have $\H=\Ha\oplus\Hb$, where $\Ha=\ell^2\times\ell^2\times\set{0}$ and $\Hb=\set{0}\times\set{0}\times\R$. 
 	We fix $\cp>2$ and for an element $(h,k,0)\in\Ha$, 
 	we define the norm
 	\[
 	S_0(h,k)=\|h\|_\cp+\|k\|_\cp,
 	\]
 	where $\|x\|_\cp=(\sum_{k=1}^\infty |x_j|^\cp)^{1/\cp}\le \|x\|_2<+\infty $ for every $x\in\ell^2$. Clearly $S_0$ gives an example of 
 	strictly weak, left invariant sub-Finsler metric.
\end{example}
 \subsubsection{Horizontal curves and sub-Finsler distances} 
We notice that the expression of the differential of translations
\eqref{eq:dL_p} proves that $v \in H_p\Hla$ if and only if 
 \begin{equation}\label{eq:dLp}
 	(dL_{-p})_p(v)  =v - \frac{1}{2}[p,v] =v_1+ v_2 - \frac{\beta(p_1,v_1) }{2} \in H_0\Hla
 \end{equation}
 and the previous condition corresponds to the equality 
\begin{equation}\label{eq:betahoriz}
 v_2 = \frac{\beta(p_1,v_1)}{2}.
\end{equation}
Thus, we have a precise formula to define the {\em horizontal curves}
 associated with $H\vH$. They are continuous and piecewise smooth curves 
 $\gamma:[0,1] \to\Hla$
 of the form $\gamma=\gamma_1+\gamma_2\in\Hla$, such that 
 for almost every $t \in [0,1]$ we have 
 $$
 \dot{\gamma}_2(t)=\frac{\beta(\gamma_1(t), \dot{\gamma}_1(t))}{2}.
 $$
 The previous differential constraint means that $\dot{\gamma}(t) \in H_{\gamma(t)}\Hla$.
 The length of a horizontal curve $\gamma:[0,1] \to \Ht$ is defined by $\ell_S(\gamma) 
 =  \int_{0}^{1} 
 S_{\gamma(t)} (\dot{\gamma} (t)) dt$, therefore
$$\ell_S(\gamma)=\int_0^1
S_0 \left (
{(dL_{-\gamma(t)})
}_{\gamma(t)}\dot{\gamma}(t) 
\right )\,dt	
=
\int_0^1
	S_0 
		( 
	\dot{\gamma}_1(t)
		  )
		 \,dt.$$		
It is not difficult to observe that all couple of points in $\Hla$ can be connected by horizontal curves. As a result, the \textit{sub-Finsler distance} 
\begin{equation*}
	\rho_S(p,q)=\inf\{ \ell_S(\gamma): \gamma \text{ is a horizontal curve with } \gamma(0)=p, \gamma(1)=q\}
\end{equation*}
is finite for every $p,q\in\Ht$,
hence $\rho_F: \Ht\times\Ht\to [0,+ \infty)$.
The fact that $\rho_F$ is left invariant, symmetric and satisfies the triangle inequality is straightforward.

\begin{remark}\label{subfinsler:positive}\em 
Let us consider a weak, left invariant sub-Finsler metric $S$ on $\Ht$, 
and a weak, left invariant Finsler metric $F$ on $\Ht$ such that $F_0|_{\Ha}=S_0$. 
We define $\rho_S$ and $d_F$ to be the associated sub-Finsler distance
and Finsler distance, respectively. 
Taking into account \eqref{def:l_F}, \eqref{eq:dLp} and \eqref{eq:betahoriz} we observe that 
$\ell_F(\gamma)=\ell_S(\gamma)$ for every horizontal curve. 
Then we immediately get 
$$
\rho_S(p,q)\ge d_F(p,q)
$$
for every $p,q\in \Hla$.
Taking into account Remark~\ref{finsler:positive} 
we also have $\rho_S(p,q)\ge d_F(p,q)>0$ whenever $\pi_1(p)\neq\pi_1(q)$. 
Notice that for any fixed weak sub-Finsler metric $S_0$ on $\Ht$, 
we can always find a weak Finsler metric $F_0$ such that $F_0|_\Ha=S_0$.
It suffices to choose any Hilbert norm $|\cdot|$ on $\Hb$, defining 
\[
F_0(x+z)=S_0(x)+|z|
\]
for every $x\in\Ha$ and $z\in\Hb$. 
\end{remark}

\subsubsection{Weak Riemannian metrics and Riemannian distances}\label{ssub:weakR}
Following Section~\ref{sect:Htype}, we consider a Hilbertian H-type group $\Hla= \Ha \oplus \Hb$ equipped with a Hilbert product $\ban{\cdot,\cdot}$ and the mapping $J_z$, $z\in\Hb$.
We fix a continuous scalar product $\sigma_0$ on $\Ht$, namely
\begin{equation}\label{ContinuityRiemannianMetric}
	\|v\|_{\sigma_0} \leq c_0 |v|
\end{equation}
for some $c_0>0$ and every $v\in\Ht$, where $\|\cdot\|_{\sigma_0}$ is 
the norm arising from $\sigma_0$.
We also require that the canonical projections
$\pi_1:\Ht\to\Ha$ and $\pi_2:\Ht\to\Hb$ are $\sigma_0$-continuous, that is
\[
\|\pi_1(v)\|_{\sigma_0} \le C \|v\|_{\sigma_0}
\quad \text{and}\quad \|\pi_2(v)\|_{\sigma_0} \le C \|v\|_{\sigma_0}
\]
for all $v\in\Ht$ and some $C>0$. 
Thus, $\sigma_0$ gives a scalar product
\begin{equation} \label{leftinvmetric:defr}
	\sigma_p(v,w) = \sigma_0 \big((dL_{p^{-1}})_pv , (dL_{p^{-1}})_pw \big)
	=\sigma_0 \big(({dL_{-p}})_pv , ({dL_{-p}})_pw \big)
\end{equation} 
for each $p \in\Ht$ and $v,w\in T_p\Ht$.
The corresponding Riemannian metric $\sigma$ on $T\Ht$ is called
{\em weak, left invariant Riemannian metric}.
Notice that the Riemannian norm $\|\cdot\|_{\sigma_0}$ on $\Ht$ is also Finsler metric, see Section~\ref{sect:Finsler}. 

Let us consider the topology defined by $\sigma_0$ on $\Ht$. When it coincides with the topology determined by the Hilbert structure of $\Ht$, we say that $\sigma$ is a 
{\em strong, left invariant Riemannian metric}.
We say that $\sigma$ is a {\em strictly weak, left invariant Riemannian metric} if it is not strong.
Finally, a (strictly) weak, left invariant Riemannian metric $\sigma$ on $\Ht$ such that
$\Ha$ and $\Hb$ are $\sigma_0$-orthogonal is called {\em (strictly) weak, graded Riemannian metric}.

For a fixed weak, left invariant Riemannian metric $\sigma$ on $\Hla$, we consider the linear and continuous operator $A:\Hla\to\Hla$ such that for all $v,w \in \Hla$ we have
$\sigma_0(v,w)=\langle v,Aw \rangle$, where $\langle \cdot, \cdot \rangle$ denotes 
the Hilbert product on $\Ht$. 
The operator $A$ exists by the classical Riesz representation theorem
and it is automatically self-adjoint and positive.

We denote by $A_{\Ha} $ its restriction to $\Ha$ and by $A_{\Hb}$ its restriction to $\Hb$. 
When $\sigma_0$ is graded, it is easy to notice that $A_\Ha(\Ha)\subset \Ha$
and $A_\Hb(\Hb)\subset\Hb$.  
Then we can consider the linear and continuous operators 
$A_{\Ha} : \Ha \to \Ha$ and 
$A_{\Hb} : \Hb \to \Hb$.

The following proposition is also standard.

\begin{proposition}\label{ImmagineOperatoreAssociatoMetrica}
	If $\sigma$ is a weak, left invariant Riemannian metric on $\Ht$, 
	then the subspace $A(\Ht)$ is dense in $\Ht$. 
	Furthermore, $\sigma$ is strong if and only if $A$ is surjective.
\end{proposition}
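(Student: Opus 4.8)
The plan is to reduce both assertions to standard facts about the bounded, self-adjoint, positive operator $A \in \cL(\Ht)$ representing $\sigma_0$ via $\sigma_0(v,w)=\langle v,Aw\rangle$. The one input that is particular to the present setting is that $\sigma_0$ is a genuine scalar product, hence positive definite: if $v \neq 0$ then $\langle v,Av\rangle = \sigma_0(v,v) > 0$, so in particular $Av \neq 0$. Therefore $A$ is injective, $\ker A = \{0\}$.

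For the density statement I would invoke the standard identity $\overline{A(\Ht)} = (\ker A^*)^{\perp}$, valid for any bounded operator on a Hilbert space, together with $A^* = A$ and $\ker A = \{0\}$. This immediately gives $\overline{A(\Ht)} = \Ht$, i.e.\ $A(\Ht)$ is dense.

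For the equivalence I would first rephrase the word \emph{strong}. Since continuity \eqref{ContinuityRiemannianMetric} already provides $\|v\|_{\sigma_0} \le c_0 |v|$, and since two norms on a vector space generate the same topology exactly when they are equivalent, $\sigma$ is strong if and only if there exists $\tilde c > 0$ with $\langle v,Av\rangle = \|v\|_{\sigma_0}^2 \ge \tilde c\,|v|^2$ for all $v \in \Ht$, i.e.\ if and only if $A \ge \tilde c\,\Id$. Granting this, the two implications run as follows. If $A \ge \tilde c\,\Id$, then by the Cauchy--Schwarz inequality $|Av| \ge \tilde c\,|v|$, so $A$ is bounded below and hence has closed range; combined with the density just proved, this forces $A(\Ht) = \Ht$. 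Conversely, if $A$ is surjective, then, being also injective, it is a bounded linear bijection, so the open mapping theorem yields a bounded inverse; thus $0$ lies outside the spectrum of $A$, which, as $A$ is positive and self-adjoint, is a compact subset of $(0,\|A\|]$ and therefore contained in $[m,\|A\|]$ for $m>0$ the bottom of the spectrum. The spectral theorem then gives $\langle v,Av\rangle \ge m\,|v|^2$, that is $\|v\|_{\sigma_0} \ge \sqrt{m}\,|v|$, so $\sigma$ is strong.

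I do not expect a genuine difficulty here: this is a routine exercise in Hilbert space operator theory. The only point that needs care is the faithful translation of the topological notion of \emph{strong} into the operator inequality $A \ge \tilde c\,\Id$ — using that one of the two norm comparisons is already built into the continuity assumption — and then pairing each direction with its appropriate tool (injectivity from positive definiteness, closedness of the range of a bounded-below operator, and the open mapping theorem together with compactness of the spectrum).
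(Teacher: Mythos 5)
Your proof is correct, and since the paper states this proposition without proof (it is explicitly labelled as ``standard''), there is no competing argument to compare it with. What you give is exactly the expected standard argument: injectivity of $A$ from positive definiteness of $\sigma_0$; density of $A(\Ht)$ from $\overline{A(\Ht)}=(\ker A^{*})^{\perp}$ together with $A^{*}=A$ and $\ker A=\{0\}$; and the translation of ``strong'' into the lower bound $\|v\|_{\sigma_0}\geq \tilde{c}\,|v|$ (legitimate because the reverse inequality is already part of the definition of a weak metric), settled in one direction by the closed range of a bounded-below operator combined with density, and in the other by the open mapping theorem together with the fact that the spectrum of the resulting invertible, positive, self-adjoint operator $A$ is bounded away from $0$.
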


For any continuous and piecewise smooth curve
$\gamma:[0,1]\to \Ht$ its Riemannian length with respect to the
weak Riemannian metric $\sigma$ is defined as
\[
\ell_\sigma(\gamma) = \int_{0}^{1} \norm{ \dot{\gamma} (t) }_\sigma dt.
\]
The \textit{geodesic distance} associated with $\sigma$ 
is the function $d_\sigma: \Hla\times\Hla\to [0,+ \infty)$ defined as
\[
d_\sigma(p,q)=\inf\{ \ell_\sigma(\gamma): \gamma \text{ is a continuous and piecewise smooth curve with } \gamma(0)=p, \gamma(1)=q\}.
\]
Clearly $d_\sigma$ is left invariant, symmetric and it satisfies the triangle inequality.

\section{Degenerate geodesic distances}\label{section:Riemannian}

The next theorem proves the existence of degenerate sub-Finsler distaces 
in any Hilbertian H-type group equipped with a strictly weak, left invariant sub-Finsler metric.

\begin{theorem}[Vanishinig of sub-Finsler distances]\label{thm:vanishingsubF}
Let $\Ht=\Ha\oplus \Hb$ be an infinite dimensional H-type group 
equipped with the canonical projections $\pi_1:\Ht\to\Ha$ and 
$\pi_2:\Ht\to\Hb$.
Let $\rho_S$ be the sub-Finsler distance arising from any
strictly weak, left invariant sub-Finsler metric $S$ on $\Ht$.
Then for every $p,q\in\Ht$ with $\pi_1(p)=\pi_1(q)$, we have $\rho_S(p,q)=0$.
\end{theorem}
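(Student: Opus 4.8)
The plan is to reduce the statement, by left-invariance, to the case $p = 0$ and $q = \pi_2(q) = z \in \Hb$, i.e.\ show $\rho_S(0, z) = 0$ for every $z \in \Hb$. The engine is the ``shrinking-space effect'': since $S_0$ is \emph{strictly} weak on $\Ha$, there is a sequence $\{w_n\} \subset \Ha$ with $|w_n|$ bounded below (say $|w_n| = 1$) but $S_0(w_n) \to 0$; equivalently, after rescaling, one can arrange $|w_n| = \sqrt{n}$ with $S_0(w_n) \to 0$, or $S_0(w_n)$ as small as we like for each prescribed ``size'' in the Hilbert norm. The key algebraic identity available to us is \eqref{eq:xJ_z}, namely $[x, J_z x] = |x|^2 z$, which says that traversing a small ``loop'' in the plane $\spn\{x, J_z x\}$ produces a vertical displacement in the direction $z$ with magnitude $|x|^2$.

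\textbf{Construction of the competitor curves.} For fixed $z \in \Hb$, $z \ne 0$, and each $n$, pick $w_n \in \Ha$ with $|w_n|^2 = |z|$ (or whatever normalization makes the vertical arithmetic come out to $z$) and $S_0(w_n)$ small; more precisely we want $|w_n|$ fixed but $S_0(w_n) \to 0$, which strict weakness supplies after scaling. Consider the horizontal curve whose $\Ha$-projection $\gamma_1$ is a parabola-type arc
\[
\gamma_1^n(t) = t\,c\sqrt{n}\,w_n + \frac{t^2 c}{2}\frac{1}{\sqrt n}\,J_z(w_n),
\]
exactly as flagged in the introduction, with $c$ chosen so that the induced vertical increment $\gamma_2^n(1) = \tfrac12\int_0^1 \beta(\gamma_1^n, \dot\gamma_1^n)\,dt$ equals $z$; computing this integral uses bilinearity of $\beta = [\cdot,\cdot]$ together with $[w_n, J_z w_n] = |w_n|^2 z$ and the vanishing $[w_n, w_n] = [J_z w_n, J_z w_n] = 0$, so the vertical part is a fixed multiple of $z$ independent of $n$ (the $\sqrt n$ and $1/\sqrt n$ cancel). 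Then
\[
\ell_S(\gamma^n) = \int_0^1 S_0(\dot\gamma_1^n(t))\,dt \le c\sqrt n\, S_0(w_n) + \frac{c}{\sqrt n}\,S_0(J_z w_n) + (\text{lower order}),
\]
and since $S_0(J_z w_n) \le c_0 |J_z w_n| = c_0 |z|\,|w_n|$ by \eqref{eq:g_0subR} and \eqref{eq:J_znorm}, the second term is $O(1/\sqrt n) \to 0$. The first term is $c\sqrt n\, S_0(w_n)$, which we force to $0$ by choosing $w_n$ along the weak-null sequence with $S_0(w_n) = o(1/\sqrt n)$; strict weakness of $S_0$ lets us extract exactly such a subsequence (pick $v_k$ with $|v_k| = 1$, $S_0(v_k) \to 0$, relabel and scale $w_n := \sqrt{|z|}\,v_{k(n)}$ with $k(n)$ chosen so $S_0(v_{k(n)}) \le n^{-1}$).

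\textbf{Main obstacle.} The delicate point is the bookkeeping between the Hilbert norm and $S_0$ so that \emph{simultaneously} (i) the vertical displacement stays exactly $z$ (a Hilbert-norm/bilinear-form constraint, hence rigid), and (ii) the $S_0$-length goes to $0$. These pull in opposite directions: fixing the vertical increment forces $|\gamma_1^n|$ to be of unit order, but then making $S_0(\dot\gamma_1^n)$ small requires the velocity to live in the $S_0$-small directions, which is precisely what strict weakness permits. The parabolic profile with the $\sqrt n / \sqrt n^{-1}$ split is what decouples these: the ``fast'' linear term carries the $S_0$-smallness (amplitude $\sqrt n$ times an $o(n^{-1/2})$ norm), while the ``slow'' quadratic term in the $J_z w_n$ direction carries the vertical height at $O(1/\sqrt n)$ $S_0$-cost. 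After establishing $\rho_S(0,z) = 0$ for all $z \in \Hb$, the general case $\pi_1(p) = \pi_1(q)$ follows: by left-invariance $\rho_S(p,q) = \rho_S(0, p^{-1}\cdot q)$, and $p^{-1}\cdot q = (q_1 - p_1) + (\text{vertical}) = 0 + (\text{vertical}) \in \Hb$ since $\pi_1(p^{-1}\cdot q) = q_1 - p_1 = 0$ by \eqref{eq:GroupOp}. One should double-check the piecewise-smooth/endpoint conventions so that $\gamma^n$ genuinely joins $0$ to $z$ (it does, by construction of $c$), and that the lower-order cross terms in $\ell_S$ — coming from the non-translation-invariant correction in $(dL_{-\gamma})_\gamma\dot\gamma$, which for horizontal curves is identically $\dot\gamma_1$, so in fact \emph{there are no} such corrections — vanish; this last simplification is exactly the content of the displayed formula $\ell_S(\gamma) = \int_0^1 S_0(\dot\gamma_1(t))\,dt$ derived earlier in the excerpt.
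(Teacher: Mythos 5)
Your construction is the same as the paper's (same weak-null sequence $w_n$ with $|w_n|=1$ and $S_0(w_n)\le 1/n$, same parabolic arc $\gamma_1^n(t)= tc\sqrt{n}\,w_n+\frac{t^2c}{2\sqrt{n}}J_z(w_n)$, same length estimate), but there is a genuine gap at the endpoint. The horizontal lift $\gamma^n$ of your arc ends at
\[
\gamma^n(1)=c\sqrt{n}\,w_n+\frac{c}{2\sqrt{n}}\,J_z(w_n)+\frac{c^2}{12}\,z ,
\]
whose $\Ha$-component is \emph{not} zero --- in fact its Hilbert norm is of order $c\sqrt{n}$, which blows up. So $\gamma^n$ does not join $0$ to a point of $\Hb$, and your parenthetical claim that ``it does, by construction of $c$'' is false: the choice of $c$ only controls the vertical increment $\gamma_2^n(1)$, not the horizontal endpoint $\gamma_1^n(1)$. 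As written, your competitor curves connect $0$ to a sequence of points running off to infinity in $\Ha$, which proves nothing about $\rho_S(0,z)$.

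The missing step is a return leg: one must concatenate $\gamma^n$ with a second horizontal curve $\alpha^n$ that brings the $\Ha$-component back to $0$ while its $S_0$-length also tends to zero. The paper does this with the time-reversed parabola $\alpha_1^n(t)=c\sqrt{n}(1-t)w_n+\frac{c}{2}\frac{1-t^2}{\sqrt{n}}J_z(w_n)$, whose length equals $\ell_S(\gamma^n)$ and whose lift adds a further $\frac{c^2}{12}z$ of vertical displacement, so the concatenation joins $0$ to $\frac{c^2}{6}z$ with total length tending to $0$; varying $c$ and $z$ then reaches every point of $\Hb$. (A straight-line return from $\gamma_1^n(1)$ to $0$ would also work: by skew-symmetry of $\beta$ it produces no vertical drift, and its $S_0$-length is $S_0(\gamma_1^n(1))\le c\sqrt{n}\,S_0(w_n)+\frac{c}{2\sqrt{n}}S_0(J_zw_n)\to 0$.) Everything else in your argument --- the reduction by left invariance, the computation of the vertical increment via $[w_n,J_zw_n]=|w_n|^2z$, and the observation that $\ell_S(\gamma)=\int_0^1S_0(\dot\gamma_1)$ for horizontal curves --- matches the paper and is correct.
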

\begin{proof}
It suffices to prove that for all $c\in\R$ and all $z\in\Hb$ with $\vert z \vert=1$, we have 
\begin{equation}\label{eq:rho00}
\rho_S\bigg(0,\frac{c^2}{3}z\bigg)=0.
\end{equation}
Since the norm $S_0$ of \eqref{eq:g_0subR} does not define the Hilbert topology of $\Ha$, there exists a sequence ${\{w^n\}}_n$ in $\Ha$ such that 
$\vert w^n\vert=1$ and 
$S_0(w^n) \leq \frac{1}{n}$ for all $n\in\N^+$.
We choose $z\in\Hb$ with $\vert z\vert=1$, and for each $n \in \N$, define
$$
\gamma_1^n(t)= tc\sqrt{n}w_n+\frac{t^2c}{2}\frac{1}{\sqrt{n}}J_z(w_n).
$$
Consider now the curve $\gamma^n = (\gamma^n_1,\gamma^n_2)$, where  
$$
\gamma^n_2(t)= 
\frac{1}{2}
\int_0^t
\beta(\gamma^n_1(s),\dot{\gamma}^n_1(s)) \, ds\in\Hb.
$$
By construction, the curve $\gamma^n$ is horizontal, therefore
$\ell_S(\gamma^n)
=
\int_0^1S_0(\dot{\gamma}_1^n(t))\,dt$.
Let us consider the following estimates
\begin{equation}
	\begin{split}
		\ell_S(\gamma^n)=&
		\int_0^1 S_0(
		{\dot{\gamma}}_1^n(t)
		) \,dt
		=
		\int_0^1
			S_0\left (c\sqrt{n}w_n+\frac{ct}{\sqrt{n}}J_z(w_n) \right)
			 \,dt\\
		\leq &
		c\sqrt{n}\,
		S_0(w_n) 
		+
		\frac{c}{\sqrt{n}}
		S_0(J_z(w_n))\leq 
		\frac{c}{\sqrt{n}}
		+
		\frac{cc_0}{\sqrt{n}}
		|J_z(w_n)|			
		=
		\frac{c}{\sqrt{n}}
		+
		\frac{c c_0}{\sqrt{n}}\cdot 
		|z|.
	\end{split} 
\end{equation}
It follows that 
\[ \lim_{n\to\infty}\ell_S(\gamma^n)=0. \] 
For each $n$, the endpoint of $\gamma^n$ is 
$$
\gamma^n(1)= 
c\sqrt{n}w_n
+
\frac{c}{2}
\frac{1}{\sqrt{n}}
J_z(w_n)+\frac{c^2}{12}\, z.
$$
Now, we define the curve $
\alpha^n_1:[0,1]\to \Ha$ as
$$
\alpha^n_1(t)
=
c\sqrt{n}(1-t)w_n
+
\frac{c}{2}
\frac{(1-t^2)}{\sqrt{n}}
J_z(w_n)
$$
and consider the lifting $\alpha^n=\alpha^n_1+\alpha^n_2$, where 
$$
\alpha^n_2(t)=\gamma^n_2(1)+
\frac{1}{2}\int_0^t\beta(\alpha^n_1(s),\dot{\alpha}^n_1(s))\,ds\in\Hb
$$
By construction, $\alpha^n$ is also horizontal and $\alpha^n(0)=\gamma^n(1)$, therefore the curve $ \alpha^n\star\gamma^n $ obtained by joining $\gamma_n$ and $\alpha_n$ is also horizontal.
For each $n\in \N$, the curve $ \alpha^n\star\gamma^n $ connects  the origin $0\in\Ht$ to the point $\frac{c^2z}{6}\in\Hb$.
We finally observe that
$$
\ell_S(\alpha^n)=
\int_0^1
S_0 (\dot{\alpha}_1^n(t))
\,dt
=
\int_0^1
S_0 \left(
		c\sqrt{n}w_n
		+
		\frac{ct}{\sqrt{n}}J_z(w_n) \right)
\,dt
=
\ell_S(\gamma^n) \to 0.
$$
Therefore, $\ell_S(\alpha_n\star\gamma_n)= \ell_S(\gamma_n) + \ell_S(\alpha_n) \to 0$. 
We have proved that \eqref{eq:rho00} holds
for every $c\in\R$ and $z\in\Hb$. 
To conclude the proof, we consider $z_1,z_2\in\Hb$, $z_1\neq z_2$ and $x\in\Ha$. 
We notice that the left invariance of the sub-Finsler distance function
yields
\[
\rho_S(x+z_1,x+z_2)=\rho_S(xz_1,xz_2)=\rho_S(z_1,z_2)=\rho_S(0,z_2-z_1).
\]
Clearly, we can find $c\neq0$ and $z\in\Hb\setminus\set{0}$ such that 
$z_2-z_1=c^2z/6$, hence 
\[
\rho_S(x+z_1,x+z_2)=\rho_S(0,c^2z/6)=0,
\]
concluding the proof.
\end{proof}

\begin{corollary}\label{cor:vanishingsubFinslerdistances}
	Let us fix a strictly weak, left invariant sub-Finsler metric $S$ on a 
	Hilbertian H-type group $\Ht = \Ha \oplus \Hb$. 
	Then for $x,y \in \Ha$ and $z_1 , z_2 \in \Hb$, we have 
	\[
	\rho_S(x+z_1,y+z_2)=0\quad \text{if and only if}\quad x=y,
	\]
	where $\rho_S$ is the sub-Finsler distance associated with $S$. 
\end{corollary}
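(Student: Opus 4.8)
The plan is to combine Theorem~\ref{thm:vanishingsubF} with Remark~\ref{finsler:positive}, adapted to the sub-Finsler setting via Remark~\ref{subfinsler:positive}. The statement of Corollary~\ref{cor:vanishingsubFinslerdistances} is an ``if and only if'', so I would split it into the two implications and treat them separately.

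For the implication $x=y \implies \rho_S(x+z_1,y+z_2)=0$, I would use left invariance of $\rho_S$ to reduce to the base point $0$: since $x+z_1 = x\cdot z_1$ and $y+z_2 = x\cdot z_2$ (because $z_1,z_2$ are central, so the $\frac12[x,z_i]$ terms vanish), left translation by $x^{-1}$ gives $\rho_S(x+z_1,y+z_2)=\rho_S(z_1,z_2)$. Then $\pi_1(z_1)=0=\pi_1(z_2)$, so Theorem~\ref{thm:vanishingsubF} applies directly and yields $\rho_S(z_1,z_2)=0$. This is essentially the concluding paragraph of the proof of Theorem~\ref{thm:vanishingsubF} itself, so there is almost nothing to do here.

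For the converse, I would argue by contraposition: suppose $x\neq y$ and show $\rho_S(x+z_1,y+z_2)>0$. Here I would invoke Remark~\ref{subfinsler:positive}: one can always build a weak, left invariant Finsler metric $F$ on $\Ht$ with $F_0|_{\Ha}=S_0$ (for instance $F_0(x'+z') = S_0(x')+|z'|$), and then $\rho_S(p,q)\ge d_F(p,q)$ for all $p,q\in\Ht$. Since $S$ is strictly weak, so is this $F$ (its restriction to $\Ha$ is $S_0$, which does not induce the Hilbert topology on $\Ha$). Now $\pi_1(x+z_1)=x\neq y=\pi_1(y+z_2)$, so Remark~\ref{finsler:positive} gives $C\,d_F(x+z_1,y+z_2)\ge F_0(x-y)>0$, the last strict inequality because $F_0$ is a genuine norm and $x-y\neq 0$. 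Chaining the two inequalities yields $\rho_S(x+z_1,y+z_2)\ge d_F(x+z_1,y+z_2)>0$, as desired.

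There is no real obstacle: both directions are immediate consequences of results already established in the excerpt, and the only mild subtlety is the bookkeeping of the auxiliary Finsler metric in the converse direction, together with checking that its strict weakness is inherited from that of $S$. I would write the proof in three or four lines, citing Theorem~\ref{thm:vanishingsubF}, Remark~\ref{finsler:positive}, and Remark~\ref{subfinsler:positive}.
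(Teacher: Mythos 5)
Your proof is correct and follows essentially the same route as the paper: the vanishing direction is a direct application of Theorem~\ref{thm:vanishingsubF}, and the positivity direction combines Remark~\ref{subfinsler:positive} (the auxiliary Finsler metric with $F_0|_{\Ha}=S_0$ and $\rho_S\ge d_F$) with Remark~\ref{finsler:positive}. The only superfluous step is checking that the auxiliary $F$ is strictly weak --- Remark~\ref{finsler:positive} holds for any weak, left invariant Finsler metric, so that verification is not needed.
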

The main implication of this corollary follows by Theorem~\ref{thm:vanishingsubF}.
The full characterization of the two conditions is obtained
by showing that points with different projections on $\Ha$ must
have positive Finsler distances. 
This is a consequence of combining Remark~\ref{finsler:positive} and Remark~\ref{subfinsler:positive}.

\begin{lemma}\label{esistenzavettoridegeneri}
If $F$ be a strictly weak, left invariant Finsler metric $F$ on a
Hilbertian H-type group $\Ht = \Ha \oplus \Hb$. Then there exists a sequence
$\{h_n\}_{n \in \N} \subset \Ha$ such that $F_0(h_n)\to 0$ and $|h_n| =1$ for all $n\in\N$. 
\end{lemma}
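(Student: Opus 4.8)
The plan is to extract the desired sequence directly from the failure of $F_0$ to be a strong metric, and then to transfer it onto $\Ha$ using the finite-dimensionality of $\Hb$ together with the $F_0$-continuity of the canonical projections. First I would record that, since $F$ is strictly weak, there is no constant $\tilde c_1>0$ with $F_0(v)\ge\tilde c_1|v|$ for every $v\in\Ht$; hence there exists a sequence $\{v_n\}_{n\in\N}\subset\Ht$ with $|v_n|=1$ and $F_0(v_n)\to 0$.

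Next I would decompose $v_n=x_n+z_n$ with $x_n=\pi_1(v_n)\in\Ha$ and $z_n=\pi_2(v_n)\in\Hb$. The standing hypothesis that $\pi_1,\pi_2$ are $F_0$-continuous provides a constant $C>0$ with $F_0(x_n)\le C\,F_0(v_n)$ and $F_0(z_n)\le C\,F_0(v_n)$, so that $F_0(x_n)\to 0$ and $F_0(z_n)\to 0$. Since $\dim(\Hb)<+\infty$, all norms on $\Hb$ are equivalent; in particular $F_0$ restricted to $\Hb$ is equivalent to the restriction of $|\cdot|$ to $\Hb$, and therefore $|z_n|\to 0$. Because $\Ha$ and $\Hb$ are orthogonal, $1=|v_n|^2=|x_n|^2+|z_n|^2$, whence $|x_n|^2=1-|z_n|^2\to 1$; in particular $x_n\neq 0$ for all sufficiently large $n$.

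Finally I would set $h_n=x_n/|x_n|$ for those indices with $x_n\neq 0$, relabelling so that the index runs over $\N$. Then $|h_n|=1$ by construction, and since $F_0$ is positively homogeneous, $F_0(h_n)=F_0(x_n)/|x_n|\to 0/1=0$ because $F_0(x_n)\to 0$ and $|x_n|\to 1$. This is the sequence claimed in the statement. As for obstacles, there is none of substance here: the only delicate point is that the ``bad'' directions produced by strict weakness could a priori concentrate on $\Hb$, which is precisely why the finite-dimensionality of $\Hb$ (forcing equivalence of norms there) and the $F_0$-continuity of the projections are both invoked; the rest is bookkeeping.
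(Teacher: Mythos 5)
Your proof is correct and follows essentially the same route as the paper's: extract a unit sequence with $F_0\to 0$ from strict weakness, project onto $\Ha$ and $\Hb$ using the $F_0$-continuity of $\pi_1,\pi_2$, kill the $\Hb$-component via equivalence of norms in finite dimensions, and renormalize the $\Ha$-component. The only (harmless) difference is that you spell out the orthogonality identity $|x_n|^2=1-|z_n|^2$ where the paper simply notes $|w_n|\to 0$ implies $|v_n|\to 1$.
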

\begin{proof}
The topology defined by $F_0$ on $\Ht$ is not the Hilbert one, therefore there exists a sequence $u_n$ in $\Ht$ such that $|u_n|=1$ for all $n$ and $F_0(u_n)\to0$. We can write 
\[
u_n= v_n+w_n=\pi_1(u_n)+\pi_2(u_n),
\]
where $v_n \in \Ha$ and $w_n \in \Hb$.
By the continuity of the projections,
$CF_0(u_n)\ge F_0(w_n)$ therefore $F_0(w_n)\to0$.
Since $\Hb$ is finite dimensional, we also have
$|w_n|\to0$, therefore $|v_n|\to1$.
Again the continuity of the projections yields 
$F_0(v_n)\to0$. 
To conclude the proof, we consider a subsequence $v_n$ of nonzero vectors, and we observe that the renormalized sequence $h_n=\frac{v_n}{ |v_n| }$ satisfies our claim.
\end{proof}

\begin{theorem}\label{thm:vanishingFinslerdistances}
Let $F$ be a strictly weak, left invariant Finsler metric on a Hilbertian H-type group $\Ht = \Ha \oplus \Hb$. 
Then for every $x,y \in \Ha$ and $z_1 , z_2 \in \Hb$, we have 
$d_F((x,z_1),(y,z_2))=0$ if and only if $x =y$.
\end{theorem}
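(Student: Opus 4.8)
The plan is to reduce the claim to Theorem~\ref{thm:vanishingsubF} together with Remarks~\ref{finsler:positive} and \ref{subfinsler:positive}, the key observation being that the restriction $S_0:=F_0|_{\Ha}$ is itself a strictly weak, left invariant sub-Finsler metric on $\Ht$.

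The direction $x\neq y\Rightarrow d_F((x,z_1),(y,z_2))\neq 0$ is already contained in Remark~\ref{finsler:positive}, which gives $C\,d_F((x,z_1),(y,z_2))\ge F_0(x-y)$, and the right-hand side is strictly positive because $F_0$ is a norm and $x-y\neq 0$. For the converse, suppose $x=y$. First I would check that $S_0:=F_0|_{\Ha}$ defines a weak, left invariant sub-Finsler metric $S$ on $\Ht$: it is a norm on $\Ha$, and the continuity bound $F_0(v)\le c_1|v|$ restricts to $S_0(v)\le c_1|v|$ for $v\in\Ha$, which is the only requirement in Section~\ref{section:weakmetrics}; moreover $F_0|_{\Ha}=S_0$ by construction. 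Next, $S$ is in fact \emph{strictly} weak: by Lemma~\ref{esistenzavettoridegeneri} there is a sequence $\{h_n\}\subset\Ha$ with $|h_n|=1$ and $S_0(h_n)=F_0(h_n)\to 0$, which excludes any bound $\tilde c\,|v|\le S_0(v)$ on $\Ha$, so the $S_0$-topology on $\Ha$ is not the Hilbert one. Since the two points $(x,z_1)$ and $(x,z_2)$ share the first projection $x$, Theorem~\ref{thm:vanishingsubF} yields $\rho_S((x,z_1),(x,z_2))=0$ for the sub-Finsler distance $\rho_S$ of $S$. Finally, as $F_0|_{\Ha}=S_0$, Remark~\ref{subfinsler:positive} gives $d_F((x,z_1),(x,z_2))\le\rho_S((x,z_1),(x,z_2))=0$, hence $d_F((x,z_1),(x,z_2))=0$, completing the proof.

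I do not expect a serious obstacle: all the analytic content — the shrinking horizontal curves built from the maps $J_z$ — has already been carried out inside the proof of Theorem~\ref{thm:vanishingsubF}, so here it is only a matter of arranging the implications. The single point needing a moment's care is that strict weakness of $F_0$ on all of $\Ht$ propagates to its restriction to $\Ha$; this is exactly where finite-dimensionality of $\Hb$ enters, since all norms on the center are equivalent and the degeneracy of $F_0$ cannot therefore be concentrated there. That step is precisely Lemma~\ref{esistenzavettoridegeneri}, and one may also note that strict weakness already forces $\Ht$ to be infinite dimensional, so that Theorem~\ref{thm:vanishingsubF} does apply.
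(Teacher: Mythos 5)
Your proposal is correct and follows essentially the same route as the paper: restrict $F_0$ to $\Ha$ to get a sub-Finsler metric, use Lemma~\ref{esistenzavettoridegeneri} to see it is strictly weak, apply Theorem~\ref{thm:vanishingsubF} together with the inequality $d_F\le\rho_S$ of Remark~\ref{subfinsler:positive}, and conclude the converse direction from Remark~\ref{finsler:positive}. No gaps.
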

\begin{proof}
The restriction of $F_0$ to $\Ha$ defines a weak sub-Finsler metric
$S_0:\Ha\to[0,+\infty)$. By Lemma~\ref{esistenzavettoridegeneri}, the 
corresponding left invariant sub-Finsler metric $S$ is strictly weak. 
In view of Remark~\ref{subfinsler:positive}, 
we have $\rho_S\ge d_F$, so we can apply Theorem~\ref{thm:vanishingsubF},
obtaining that $d_F(p,q)=0$, whenever $\pi_1(p)=\pi_1(q)$. By Remark~\ref{finsler:positive}, the proof is complete.
\end{proof}

\begin{corollary}\label{cor:vanishingRiemanniandistances}
Let $\sigma$ be a strictly weak, left invariant Riemannian metric on a Hilbertian H-type group $\Ht = \Ha \oplus \Hb$. 
	Then for every $x,y \in \Ha$ and $z_1 , z_2 \in \Hb$, we have 
	$d_\sigma((x,z_1),(y,z_2))=0$ if and only if $x =y$.
\end{corollary}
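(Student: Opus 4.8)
The plan is to reduce the statement to Theorem~\ref{thm:vanishingFinslerdistances} by regarding the strictly weak, left invariant Riemannian metric as a strictly weak, left invariant Finsler metric. First I would set $F_0 := \|\cdot\|_{\sigma_0}$, the norm on $\Ht$ arising from the continuous scalar product $\sigma_0$; this is a genuine norm on the linear space $\Ht$, hence a Finsler norm in the sense of Section~\ref{sect:Finsler}. The three standing assumptions on $\sigma_0$ made in Section~\ref{ssub:weakR} translate verbatim into the three defining properties of a weak, left invariant Finsler metric: the continuity estimate \eqref{ContinuityRiemannianMetric} reads $F_0(v)\le c_0|v|$ for all $v\in\Ht$, and the $\sigma_0$-continuity of $\pi_1,\pi_2$ reads $F_0(\pi_1(v))\le C F_0(v)$ and $F_0(\pi_2(v))\le C F_0(v)$. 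Hence $F_0$ defines a weak, left invariant Finsler metric $F$ on $T\Ht$ via $F_p(v)=F_0((dL_{-p})_p(v))$. Moreover, the topology induced by $F_0$ on $\Ht$ is by definition the topology induced by $\sigma_0$; since $\sigma$ is strictly weak, this topology does not coincide with the Hilbert topology, so $F$ is a \emph{strictly weak}, left invariant Finsler metric. (Note that gradedness of $\sigma$ is not needed here.)

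Next I would check that the Riemannian distance $d_\sigma$ coincides with the Finsler distance $d_F$. For a continuous, piecewise smooth curve $\gamma:[0,1]\to\Ht$, the definition \eqref{leftinvmetric:defr} of $\sigma_p$ gives, for a.e.\ $t$,
\[
\norm{\dot\gamma(t)}_\sigma = \norm{(dL_{-\gamma(t)})_{\gamma(t)}\dot\gamma(t)}_{\sigma_0} = F_0\big((dL_{-\gamma(t)})_{\gamma(t)}\dot\gamma(t)\big) = F_{\gamma(t)}(\dot\gamma(t)).
\]
Integrating, $\ell_\sigma(\gamma)=\ell_F(\gamma)$ for every such curve, and since $d_\sigma$ and $d_F$ are defined as infima of these lengths over the same class of curves with given endpoints, we conclude $d_\sigma=d_F$ on $\Ht\times\Ht$.

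Finally, I would apply Theorem~\ref{thm:vanishingFinslerdistances} to the strictly weak, left invariant Finsler metric $F$: for $x,y\in\Ha$ and $z_1,z_2\in\Hb$ we obtain $d_\sigma((x,z_1),(y,z_2))=d_F((x,z_1),(y,z_2))=0$ if and only if $x=y$, which is exactly the assertion of the corollary.

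There is essentially no hard step: the result is a clean specialization of the Finsler statement, and the only thing requiring a word of care is the bookkeeping that shows the hypotheses on $\sigma_0$ are precisely the hypotheses needed to view $F_0=\|\cdot\|_{\sigma_0}$ as a strictly weak, left invariant Finsler metric, together with the elementary identity $\norm{\dot\gamma(t)}_\sigma = F_{\gamma(t)}(\dot\gamma(t))$ that makes $d_\sigma=d_F$. Once this is in place, Theorem~\ref{thm:vanishingFinslerdistances} does all the work.
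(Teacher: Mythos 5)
Your proposal is correct and follows exactly the route the paper takes: the paper's one-line justification is precisely the observation that a strictly weak, left invariant Riemannian metric yields a strictly weak, left invariant Finsler metric via $F_0=\|\cdot\|_{\sigma_0}$, after which Theorem~\ref{thm:vanishingFinslerdistances} applies. Your write-up merely makes explicit the bookkeeping (continuity of $F_0$ and of the projections, the identity $\ell_\sigma=\ell_F$, hence $d_\sigma=d_F$) that the paper leaves implicit.
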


The previous corollary follows by observing that a sitrictly weak, left invariant Riemannian metric also yields a strictly weak, left invariant Finsler metric.

\section{Non-existence of the Levi-Civita covariant derivative}\label{sec:nonexLC}

In this section, we fix a Hilbertian H-type group $\Ht$ with its Lie product $[\cdot , \cdot]$. We consider the Lie algebra $\Lie \Ht$ of left invariant vector fields on $\Ht$. The associated Lie product is the skew-symmetric bilinear mapping $[\cdot , \cdot]: \Lie \Ht \times \Lie \Ht \to \Lie \Ht$ its Lie product. In our setting, the linear structure of $\Ht$ allows us also consider 
the "identification" $\kI: \Ht \to \Lie \Ht$,
where $\kI(v)=X_v$ is the unique left invariant vector field of $\Lie(\Ht)$ such that $X_v(0)=v$. 
In fact, there is the already mentioned identification
between $T_0\Ht$ and $\Ht$.
Throughout the section, the continuous linear 
and self-adjoint operator $A:\Ht\to\Ht$ is defined
by the weak metric $\sigma_0(v,w)=\ban{v,Aw}$
for $v,w\in\Ht$.

The first result of this section is to prove that the Lie algebra $\Lie(\Ht)$ is actually isomorphic to the starting Lie algebra $\Ht$, and the isomorphism is given by the map $\kI$. 

\begin{proposition}\label{IsoAlgebreLie}
Let $\Ht$ be an H-type group. Then the map $\kI$ is a Lie algebra isomorphism, that is, for every $x, y \in \Ht$ we have $\kI_{[x , y]}=[\kI_x ,\kI_y]$. 
\end{proposition}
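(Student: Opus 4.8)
The plan is to verify directly that the map $\kI$, which sends a vector $v\in\Ht$ to the unique left invariant vector field $X_v$ with $X_v(0)=v$, intertwines the algebraic Lie bracket $[\cdot,\cdot]$ on $\Ht$ with the Lie bracket of left invariant vector fields. Since $\kI$ is manifestly linear (because $X_{v}$ depends linearly on its value at $0$) and bijective (left invariant vector fields are determined by their value at the identity), the only substantive claim is the bracket identity $[X_x,X_y]=X_{[x,y]}$. First I would write down an explicit formula for $X_v$: using the identification $T_q\Ht\cong\Ht$ and the formula \eqref{eq:dL_p} for $(dL_q)_0$, the left invariant field is $X_v(q)=(dL_q)_0(v)=v+\tfrac12[q,v]=v_1+v_2+\tfrac12\beta(q_1,v_1)$, where $q=q_1+q_2$ and $v=v_1+v_2$ in the decomposition $\Ht=\Ha\oplus\Hb$.

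Next I would compute the Lie bracket $[X_x,X_y]$ as vector fields on the linear manifold $\Ht$, for which $[X_x,X_y](q)=DX_y(q)[X_x(q)]-DX_x(q)[X_y(q)]$, where $DX_v(q)$ denotes the (constant-coefficient) Fréchet derivative of the affine map $q\mapsto X_v(q)$. Since $X_v(q)=v+\tfrac12[q,v]$ is affine in $q$ with linear part $q\mapsto \tfrac12[q,v]$, we get $DX_v(q)[u]=\tfrac12[u,v]$ for every $u\in\Ht$. Substituting $u=X_x(q)$ and $u=X_y(q)$ and using skew-symmetry of $[\cdot,\cdot]$ yields
\[
[X_x,X_y](q)=\tfrac12\big[X_x(q),y\big]-\tfrac12\big[X_y(q),x\big]
=\tfrac12\big[x+\tfrac12[q,x],\,y\big]-\tfrac12\big[y+\tfrac12[q,y],\,x\big].
\]
Expanding, the terms $\tfrac14[[q,x],y]$ and $\tfrac14[[q,y],x]$ vanish: indeed $[q,x],[q,y]\in[\Ht,\Ht]\subset\Hb$ by condition~(I), and then $[\Hb,\Ht]=\{0\}$ forces both double brackets to be zero. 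What survives is $\tfrac12[x,y]-\tfrac12[y,x]=[x,y]$, a constant vector, so $[X_x,X_y](q)=[x,y]=X_{[x,y]}(q)$ for all $q$ — using once more that $[x,y]\in\Hb$ is central, hence $X_{[x,y]}$ is itself the constant field $q\mapsto[x,y]$. This gives $\kI_{[x,y]}=[\kI_x,\kI_y]$.

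The only delicate point is making the computation of the vector-field bracket rigorous in the Hilbert-manifold setting rather than treating $\Ht$ as if it were finite dimensional: one must check that $X_v$ is smooth (it is, being affine and continuous, since $[\cdot,\cdot]$ is continuous bilinear), that the Lie bracket of smooth vector fields on an open subset of a Banach space is indeed given by the difference of directional derivatives above, and that $\kI$ is a continuous linear isomorphism with continuous inverse (evaluation at $0$), so that "isomorphism of Lie algebras" holds in the topological sense as well. None of these is hard — they follow from the standard calculus of vector fields on Banach manifolds, e.g. as in \cite{Hamilton1982} or \cite{Schmeding2023IDG} — but I would state them explicitly, since the whole point of the paper is that infinite-dimensionality must be handled with care. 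The nilpotency encoded in~(I), namely $[\Ht,\Hb]=\{0\}$, is exactly what kills the second-order terms and makes the identity clean.
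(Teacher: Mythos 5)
Your proof is correct and is exactly the ``standard argument'' the paper alludes to: the paper does not write out a proof but simply notes that it follows from the BCH form of the group law and cites \cite[Proposition~2.1]{MagRaj2014}, whereas you carry out the computation explicitly, with the two-step nilpotency $[\Ht,\Hb]=\{0\}$ killing the quadratic terms just as you say. The bracket formula $[X,Y](q)=DY(q)[X(q)]-DX(q)[Y(q)]$ and the identification $X_v(q)=v+\tfrac12[q,v]$ are both the right ingredients, so no gap.
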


The proof of the previous proposition can be obtained by standard arguments, taking into account that the group operation in $\Ht$ is given by the BCH formula and the Lie product on $\Ht$. Actually, it holds in general Banach nilpotent Lie groups, \cite[Proposition~2.1]{MagRaj2014}.

\begin{theorem}\label{CondizioneNecessariaEsistenzaLeviCivita}
Let $\sigma$ be a weak, graded Riemannian metric on a Hilbertian H-type group $\Ht = \Ha \oplus \Hb$. If $\sigma$ admits the Levi-Civita covariant derivative $\nabla$, then for every $x= x_1 + x_2 \in \Ht$ with $x_1 \in \Ha$ and $x_2 \in \Hb$ we have
\begin{equation}
 J_{A x_2} x_1 \in \im A\quad 
 \text{ and } \quad
 \nabla_{\kI_x} \kI_x (0) 
 = 
 - A^{-1} 
 \left (   
 J_{A x_2} x_1
 \right).
\end{equation} 
\end{theorem}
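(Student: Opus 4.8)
The plan is to derive the Koszul identity for the assumed connection and evaluate it on left invariant fields. If $\nabla$ is a Levi-Civita covariant derivative for $\sigma$, then it is metric compatible and torsion-free, and from these two properties alone the standard cyclic-sum manipulation yields, for all smooth vector fields $X,Y,Z$,
\begin{equation*}
2\,\sigma(\nabla_X Y, Z) = X\sigma(Y,Z) + Y\sigma(X,Z) - Z\sigma(X,Y) + \sigma([X,Y],Z) - \sigma([X,Z],Y) - \sigma([Y,Z],X).
\end{equation*}
This identity is purely algebraic, so it remains valid for a weak metric $\sigma$; I would emphasize that it is only a \emph{necessary} condition, which is exactly what the hypothesis that $\sigma$ admits $\nabla$ permits me to invoke.

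First I would specialize to the left invariant fields $X=Y=\kI_x$ and $Z=\kI_w$, with $w\in\Ht$ arbitrary. Since $\sigma$ and the fields $\kI_u$ are left invariant, each function $\sigma(\kI_u,\kI_v)$ is constant, equal to $\sigma_0(u,v)$, so the three derivative terms vanish; moreover $[\kI_x,\kI_x]=0$. Using Proposition~\ref{IsoAlgebreLie} to identify $[\kI_x,\kI_w](0)=[x,w]$ and evaluating at the unit element $0$, the formula collapses to
\begin{equation*}
\sigma_0\big(\nabla_{\kI_x}\kI_x(0), w\big) = -\,\sigma_0([x,w], x) = -\langle [x,w], Ax \rangle,
\end{equation*}
valid for every $w\in\Ht$, where the last equality uses $\sigma_0(\cdot,\cdot)=\langle\cdot,A\cdot\rangle$.

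Next I would compute the right-hand side through the H-type structure and the grading. Writing $x=x_1+x_2$ and $w=w_1+w_2$ with $x_1,w_1\in\Ha$ and $x_2,w_2\in\Hb$, the relations $[\Ht,\Ht]\subset\Hb$ and $[\Ht,\Hb]=\{0\}$ give $[x,w]=[x_1,w_1]\in\Hb$. Since $\sigma$ is graded, $Ax=Ax_1+Ax_2$ with $Ax_1\in\Ha$ and $Ax_2\in\Hb$, so only the central part survives the pairing and $\langle [x,w],Ax\rangle=\langle [x_1,w_1],Ax_2\rangle$. The defining formula $\langle J_z x,y\rangle=\langle z,[x,y]\rangle$ with $z=Ax_2$ rewrites this as $\langle J_{Ax_2}x_1,w_1\rangle$, and because $J_{Ax_2}x_1\in\Ha$ is Hilbert-orthogonal to $\Hb$ I may replace $w_1$ by $w$. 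Using that $A$ is self-adjoint, I obtain
\begin{equation*}
\langle A\,\nabla_{\kI_x}\kI_x(0), w\rangle = \sigma_0\big(\nabla_{\kI_x}\kI_x(0), w\big) = -\langle J_{Ax_2} x_1, w\rangle
\end{equation*}
for every $w\in\Ht$. Hence $A\,\nabla_{\kI_x}\kI_x(0)=-J_{Ax_2}x_1$, which already gives $J_{Ax_2}x_1=-A(\nabla_{\kI_x}\kI_x(0))\in\im A$, the first claim. Since $\sigma_0$ is a genuine scalar product, $\langle Av,v\rangle=\sigma_0(v,v)>0$ for $v\neq0$, so $A$ is injective and $A^{-1}$ is well defined on $\im A$; applying it yields $\nabla_{\kI_x}\kI_x(0)=-A^{-1}(J_{Ax_2}x_1)$, the second claim, with the correct sign.

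I expect the main subtlety to be the justification that the Koszul identity genuinely applies in this weak setting: for a strictly weak metric the formula only pins down the functional $w\mapsto\sigma_0(\nabla_X Y(0),w)$ and does not by itself supply a representing vector, which is precisely why the \emph{existence} of $\nabla$ must be assumed rather than deduced. Granting that hypothesis, the remaining care is purely bookkeeping: reducing $[x,w]$ to its horizontal bracket $[x_1,w_1]$ and using the grading to isolate $Ax_2$, both of which are immediate from the structure equations recorded earlier.
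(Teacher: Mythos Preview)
Your argument is correct and is essentially the same as the paper's proof: both reduce to the identity $\sigma_0(\nabla_{\kI_x}\kI_x(0),w)=-\sigma_0([x,w],x)$ on left invariant fields, then unpack it through the grading and the defining relation of $J$ to obtain $A\,\nabla_{\kI_x}\kI_x(0)=-J_{Ax_2}x_1$. The only cosmetic difference is that you quote the Koszul formula directly, whereas the paper derives the needed special case by hand from metric compatibility and torsion-freeness; the subsequent identification via self-adjointness of $A$ and orthogonality of $\Ha$ and $\Hb$ is the same in both.
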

\begin{proof}
Suppose that $\nabla$ is the Levi-Civita covariant derivative.
Since $\nabla$ is torsion-free, we have 
$[\kI_x , \kI_y]= \nabla_{\kI_x} \kI_y - \nabla_{\kI_y} \kI_x$
for $x,y\in\Ht$. 
By the left invariance of $\kI_x$ and $\kI_y$, 
the function $\Ht\ni p \to \sigma_p(\kI_x (p),\kI_y (p)) $ is constantly equal to $\sigma_0 ( x , y )$, by the identification
of $\Ht$ with $T_0\Ht$. 
The key property of the Levi-Civita covariant derivative
yields 
\begin{equation}\label{Zsigma}
	0 = Z \sigma(\kI_x,\kI_y) 
	= 
	\sigma (\nabla_Z \kI_x, \kI_y ) 
	+
	\sigma (\kI_x , \nabla_Z \kI_y )
\end{equation}
for every $Z$ vector field on $\Ht$.
Notice that the previous equations for $x=y$ yield 
\[
\sigma (\kI_x , \nabla_Z \kI_x )=0.
\]
As a consequence, using again \eqref{Zsigma}, we get
\begin{align*}
\sigma([\kI_x ,\kI_y ],\kI_x)
&=
\sigma( \nabla_{\kI_x} \kI_y , \kI_x )
-
\sigma( \nabla_{\kI_y} \kI_x , \kI_x )
=
\sigma( \nabla_{\kI_x} \kI_y , \kI_x )  \\
&=
-
\sigma(\kI_y ,  \nabla_{\kI_x}\kI_x )
=
-
\sigma_0( y ,  \nabla_{\kI_x}\kI_x (0) ). 
\end{align*}
By Proposition~\ref{IsoAlgebreLie}, it follows that
$$ 
\sigma([\kI_x ,\kI_y],\kI_x)
=
\sigma(\kI_{[x , y ]},\kI_x)
=
\sigma_0 (\kI_{[ x , y ]} (0),\kI_x (0))
=
\sigma_0([ x , y ], x).
$$
Therefore, we have proved that
\begin{equation}
	\sigma_0 (y, \nabla_{\kI_x}\kI_x (0) )
	=
	-
	\sigma_0 ([x,y] , x),
\end{equation}
which immediately leads us to the following equalities
\begin{equation}\label{eqThmNonEsistenzaLevi}
\lan y , A \nabla_{\kI_x}\kI_x (0) \ran
=
-\lan [x,y] , Ax \ran
=
-\lan [x_1 , y_1] , A x_2 \ran
=
-\lan  y_1 , J_{A x_2} x_1 \ran.
\end{equation}
In particular,
formula \eqref{eqThmNonEsistenzaLevi} holds true for all $y \in \Hb$, hence $A \nabla_{\kI_x}\kI_x (0) \in \Ha$. 
Now, taking $y \in \Ha$ in formula \eqref{eqThmNonEsistenzaLevi} we get
$$
A \nabla_{\kI_x}\kI_x (0)
=
- J_{A x_2} x_1,
$$
which proves our claim.
\end{proof}

\begin{theorem}\label{thm:nonesistenzaLeviCivita}
Let $\sigma$ be a weak, graded Riemannian metric on an H-type group $\Ht$. If $\sigma$ is strictly weak, then it does not admit the Levi-Civita covariant derivative.
\end{theorem}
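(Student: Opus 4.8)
The plan is to argue by contradiction, playing the structural obstruction of Theorem~\ref{CondizioneNecessariaEsistenzaLeviCivita} against the non-surjectivity of the operator $A$ furnished by Proposition~\ref{ImmagineOperatoreAssociatoMetrica}. Assume that the strictly weak, graded metric $\sigma$ admits a Levi-Civita covariant derivative $\nabla$. By Theorem~\ref{CondizioneNecessariaEsistenzaLeviCivita}, for every decomposition $x = x_1 + x_2$ with $x_1\in\Ha$ and $x_2\in\Hb$ we would necessarily have $J_{A x_2} x_1 \in \im A$. The whole proof then reduces to exhibiting a single pair $x_1\in\Ha$, $x_2\in\Hb$ violating this membership.

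First I would record the elementary consequences of gradedness. Since $\sigma$ is graded, $A$ maps $\Ha$ into $\Ha$ and $\Hb$ into $\Hb$, so every $v=v_1+v_2$ satisfies $Av = A_{\Ha}v_1 + A_{\Hb}v_2$ with orthogonal summands, whence $\im A = A_{\Ha}(\Ha)\oplus A_{\Hb}(\Hb)$. As $\Hb$ is finite dimensional and $A_{\Hb}$ is self-adjoint and positive, it is invertible, so $A_{\Hb}(\Hb)=\Hb$ and $\im A = A_{\Ha}(\Ha)\oplus\Hb$; in particular, projecting onto $\Hb$ shows that $\im A\cap\Ha = A_{\Ha}(\Ha)$. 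Now $\sigma$ strictly weak means, by Proposition~\ref{ImmagineOperatoreAssociatoMetrica}, that $A$ is not surjective, and combined with the identity just obtained this forces $A_{\Ha}(\Ha)\subsetneq\Ha$.

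Next I would exploit the algebraic rigidity of the $J$-maps. Choose any $x_2\in\Hb\setminus\{0\}$; since $A$ is positive it is injective (if $Av=0$ then $\sigma_0(v,v)=0$), so $Ax_2\neq 0$, and the H-type identity \eqref{eq:J_z2} gives $J_{Ax_2}^2=-|Ax_2|^2\,\Id_\Ha$, so that $J_{Ax_2}\colon\Ha\to\Ha$ is a continuous linear isomorphism with continuous inverse $-|Ax_2|^{-2}J_{Ax_2}$. Hence $J_{Ax_2}^{-1}\big(A_{\Ha}(\Ha)\big)$ is a \emph{proper} linear subspace of $\Ha$, and I may pick $x_1\in\Ha$ with $J_{Ax_2}x_1\notin A_{\Ha}(\Ha)$. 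Since $J_{Ax_2}x_1\in\Ha$ and $\im A\cap\Ha = A_{\Ha}(\Ha)$, this yields $J_{Ax_2}x_1\notin\im A$, contradicting Theorem~\ref{CondizioneNecessariaEsistenzaLeviCivita}. Therefore $\sigma$ admits no Levi-Civita covariant derivative.

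The argument is short once Theorem~\ref{CondizioneNecessariaEsistenzaLeviCivita} is available; the only points requiring a little care are the reduction $\im A = A_{\Ha}(\Ha)\oplus\Hb$, which genuinely uses $\dim\Hb<\infty$ together with gradedness, and the observation that $J_{Ax_2}$ is honestly invertible, so that a proper target subspace pulls back to a proper one. No analytic input beyond ``$\im A$ is a proper subspace'' is needed, so I do not anticipate a serious obstacle here — the substance of the result already lives in Theorem~\ref{CondizioneNecessariaEsistenzaLeviCivita}.
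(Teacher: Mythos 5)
Your argument is correct and is essentially the proof in the paper: both reduce to Theorem~\ref{CondizioneNecessariaEsistenzaLeviCivita}, use finite-dimensionality of $\Hb$ to force $A_\Ha$ to be non-surjective, and then use invertibility of $J_{Ax_2}$ to produce $x_1$ with $J_{Ax_2}x_1\notin\im A$ (the paper takes $x_1=J_{Ax_2}v$ with $v\notin A_\Ha(\Ha)$ and applies $J_{Ax_2}^2=-|Ax_2|^2\Id_\Ha$, which is the same step as your pullback by $J_{Ax_2}^{-1}$). Your explicit verification that $\im A\cap\Ha=A_\Ha(\Ha)$ is a detail the paper leaves implicit, and it is a welcome addition.
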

\begin{proof}
If $\sigma$ is strictly weak, then its associated operator $A$ is not surjective, by Proposition~\ref{ImmagineOperatoreAssociatoMetrica}. 
Since $\Hb$ is finite dimensional and $A_{\Hb}$ is injective, then $A_{\Hb}$ is also surjective. As a consequence, 
$A_{\Ha}$ cannot be surjective, hence
we have can choose $v \in \Ha$ such that $v  \notin A_\Ha(\Ha)$. We consider $x_2 \in \Hb $,
$x_2 \neq 0$ and we define
$x =J_{A x_2}v+x_2$, $x_1 = J_{A x_2} v$. 
By \eqref{eq:J_z2} we have
$$
J_{A x_2} x_1 = J_{A x_2}(J_{A x_2} v) = - |A x_2|^2 v \notin \im A.
$$ 
Hence, by Theorem~\ref{CondizioneNecessariaEsistenzaLeviCivita} we get
a contradiction, therefore the Levi-Civita covariant derivative does not exists for $\sigma$.
\end{proof}

Since strong Riemannian metrics always admit the Levi-Civita covariant derivative, the next corollary is straightforward.

\begin{corollary}
	Let $\sigma$ be a weak, graded Riemannian metric on an H-type group $\Ht$. Then,
	$\sigma$ admits the Levi-Civita covariant derivative if and only if it is a strong Riemannian metric.
\end{corollary}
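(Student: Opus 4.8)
The plan is to prove the biconditional by treating its two implications separately; both follow from facts already in hand. Throughout I use that, by the definitions in Section~\ref{ssub:weakR}, a weak, graded Riemannian metric is either strong or strictly weak, and these two alternatives are mutually exclusive and exhaustive.

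For the direction ``$\sigma$ strong $\Rightarrow$ the Levi-Civita covariant derivative exists'', I would invoke the classical construction on a Hilbert manifold carrying a strong Riemannian metric: since each $\sigma_p$ then induces the manifold topology on $T_p\Ht$, the right-hand side of the Koszul formula is, for fixed vector fields $X$ and $Y$, a bounded linear functional of the third argument, hence represented by a unique tangent vector which one defines to be $\nabla_X Y$; the metric and torsion-free properties are checked in the usual way. This is the standard fact recalled in the Introduction and, e.g., in \cite{BilMer2017}, and it uses neither left invariance nor gradedness of $\sigma$.

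For the converse, I would argue by contraposition using Theorem~\ref{thm:nonesistenzaLeviCivita}: if $\sigma$ is not a strong Riemannian metric, then as a weak, graded Riemannian metric it must be strictly weak, and Theorem~\ref{thm:nonesistenzaLeviCivita} says precisely that such a $\sigma$ does not admit the Levi-Civita covariant derivative. Equivalently, existence of $\nabla$ forces $\sigma$ to be strong. Combining the two implications yields the corollary. I do not expect any genuine obstacle here: the only point to keep track of is the ``strong versus strictly weak'' dichotomy for weak, graded Riemannian metrics, which is built into the definitions of Section~\ref{ssub:weakR}, so no computation beyond Theorem~\ref{thm:nonesistenzaLeviCivita} is required.
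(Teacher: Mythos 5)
Your proposal is correct and matches the paper's argument exactly: the paper also deduces the corollary from the standard fact that strong Riemannian metrics admit the Levi-Civita covariant derivative together with Theorem~\ref{thm:nonesistenzaLeviCivita}, using the strong/strictly-weak dichotomy. No issues.
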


\section{Blow-up of the sectional curvature}
We consider a Hilbertian H-type group $\Ht = \Ha \oplus \Hb$, endowed with a weak, graded Riemannian metric $\sigma$. If $\sigma$ is strong, then the sectional curvature can be computed using the Riemann tensor and the Levi-Civita covariant derivative, \cite{Klingenberg1995}. 
This approach in general does not apply when $\sigma$ is strictly weak, as a consequence of Theorem~\ref{thm:nonesistenzaLeviCivita}. 
We will also show how the Arnold's formula allows us to compute the sectional curvature for a special 
family of planes. Finally, we find a sequence of 
planes where the sectional curvatures blow-up.
 
\subsection{The \texorpdfstring{$B$}{B}-adjoint vector}

We consider the adjoint representation 
$\ad: \Ht \to \End(\Ht)$, where 
the endomorphism  $\ad_x (y)= [x , y]$ is defined by the Lie product
of $\Ht$. 
For a fixed couple of vectors $x, y \in \Ht $, we consider (in case it exists) the unique vector 
$B_{\sigma_0}(y , x) \in \Ht$ which satisfies 
the formula
\begin{equation}\label{eq:adT}
\ban{z , B_{\sigma_0}(y , x ) }_{\sigma_0}
=
\ban{ [x,z] , y }_{\sigma_0 }
\end{equation}
for every $ z \in \Ht $.
We say that $B_{\sigma_0}(y,x)$ is the 
{\em $B$-adjoint vector of $(y,x)$ with respect to $\sigma_0$}. When this vector exists, it automatically satisfies
\[
B_{\sigma_0}(ty,sx)=ts\,B_{\sigma_0}(y,x)
\]
for every $t,s\in\R$.
And also $B_{\sigma_0}(ty,sx)$ exists for some $t,s\neq0$ if and only if $B_{\sigma_0}(y,x)$ exists.
If $\sigma_0$ is a strong metric, then 
the classical Riesz representation theorem
yields the existence of $B_\sigma( y , x)$
for all $x,y\in\Ht$. 
Precisely, in this case, 
\[
B_{\sigma_0} (y, x)=\ad_x^\top(y),
\]
where $\ad_x^{\top} : \Ht \to \Ht$ is the adjoint
operator of $\ad_x $ with respect to $\sigma_0$. 
For a strictly weak Riemannian metric, the existence of $B_\sigma( y , x) \in  \Ht$ for fixed $ x, y \in  \Ht $ does not necessarily hold. 
A simple example can be found for instance in \cite{MagTib2023}.

\subsection{Arnold's formula}
To compute the sectional curvature of planes in a Hilbertian H-type group $\Ht$, 
we use the Arnold's formula
\cite[Theorem~5]{Arn66},
see also \cite{MicRat1998GeoVir}, \cite{BBM14}
and \cite{BBM2014}. 

Let us consider two {\em $\sigma_0$-orthonormal vectors $ x , y \in \Ht$}, such that the 
$B$-adjoint vectors
\[
B_{\sigma_0}( y , x), B_{\sigma_0}(x,y), 
B_{\sigma_0}( x, x), B_{\sigma_0}(y,y) \in\Ht
\]
all exist. We introduce the notation 
$\Pi_{x,y}$ to denote the vector subspace
spanned by $x$ and $y$.
The sectional curvature of $\Pi_{x,y}$
can be obtained by 
\begin{equation}\label{def:K(X,Y)Arnold}
K_\sigma(\Pi_{x,y})
=
\ban{\delta,\delta}_{\sigma_0}
+
2\ban{\alpha,\beta}_{\sigma_0}
-
3\ban{\alpha,\alpha}_{\sigma_0}
-
4\ban{B_x,B_y}_{\sigma_0}.
\end{equation}
In the previous formula we have defined 
\begin{align}
\delta
&=\frac12\pa{B_{\sigma_0}(x,y)+B_{\sigma_0}(y,x)},\quad\beta=\frac12\pa{B_{\sigma_0}(x,y)-B_{\sigma_0}(y,x)},
\quad\alpha=\frac12[x,y] \label{eq:delta_1} \\
B_x
&=\frac12B_{\sigma_0}(x,x)\quad \text{and}\quad B_y=\frac12B_{\sigma_0}(y,y). \label{eq:delta_2}
\end{align}
It is a simple computation to verify that the sectional curvature of a plane defined through this formula does not depend on the choice of the $\sigma_0$-orthonormal basis for that plane.

First of all, we provide a 
condition for which the vector $B_{\sigma} (y , x)$ exists with $x, y \in \Ht $ fixed,
see the following proposition.

\begin{proposition}[Existence of the $B$-adjoint vector]\label{CaratterizzazioneTrasposto}
Let $\sigma_0$ be a weak, graded Riemannian metric on a Hilbertian H-type group $\Ht = \Ha \oplus \Hb$
and let $ x=x_1 + x_2$, $y=y_1 + y_2 \in \Ht $, with $x_1,y_1 \in \Ha$ and $x_2 , y_2 \in \Hb$.
It follows that
\begin{equation}\label{ConditionExistenceTrasposto}
\text{there exists} \; B_{\sigma_0}(y,x) \in \Ht
\;\text{if and only if}\;
J_{A y_2} x_1 \in A_\Ha(\Ha).
\end{equation}
If one of the previous conditions holds, then 
\begin{equation}\label{FormulaPerAggiunto}
B_{\sigma_0}(y,x)= A^{-1}( J_{A y_2} x_1 ).
\end{equation}
\end{proposition}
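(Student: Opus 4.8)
The plan is to unwind the defining relation \eqref{eq:adT} by inserting the operator $A$ attached to $\sigma_0$ and the structure map $J$, thereby turning the existence of $B_{\sigma_0}(y,x)$ into a membership question for $\im A$. Write $b=B_{\sigma_0}(y,x)$ for the vector we are looking for. Since $\sigma_0(v,w)=\langle v,Aw\rangle$, the defining condition \eqref{eq:adT} reads
\[
\langle z,Ab\rangle=\langle [x,z],Ay\rangle\qquad\text{for all }z=z_1+z_2\in\Ht .
\]
First I would simplify the right-hand side. Because $[\Ht,\Hb]=\{0\}$, we have $[x,z]=[x_1+x_2,z_1+z_2]=\beta(x_1,z_1)\in\Hb$; and since $\sigma$ is graded the operator $A$ preserves the splitting $\Ha\oplus\Hb$, so $Ay=Ay_1+Ay_2$ with $Ay_1\in\Ha$ and $Ay_2\in\Hb$, and orthogonality of $\Ha$ and $\Hb$ for $\langle\cdot,\cdot\rangle$ kills the $\Ha$-part, giving $\langle [x,z],Ay\rangle=\langle\beta(x_1,z_1),Ay_2\rangle$. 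Now apply the defining identity \eqref{eq:J_zxy} of $J$ with $Ay_2\in\Hb$ and $x_1,z_1\in\Ha$: this yields $\langle\beta(x_1,z_1),Ay_2\rangle=\langle J_{Ay_2}x_1,z_1\rangle$, and because $J_{Ay_2}x_1\in\Ha$ we may replace $z_1$ by $z$. Hence \eqref{eq:adT} is equivalent to $\langle z,Ab\rangle=\langle z,J_{Ay_2}x_1\rangle$ for every $z\in\Ht$, i.e. to the single linear equation
\[
Ab=J_{Ay_2}x_1 .
\]

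From here the statement follows at once. A solution $b$ exists if and only if $J_{Ay_2}x_1\in\im A$; and since $J_{Ay_2}x_1\in\Ha$ and $A$ respects the grading with $A_\Hb$ injective on the finite dimensional space $\Hb$, one has $\im A\cap\Ha=A_\Ha(\Ha)$, so the condition is exactly $J_{Ay_2}x_1\in A_\Ha(\Ha)$, which is \eqref{ConditionExistenceTrasposto}. When this holds, $A$ is injective — it is positive, being the operator associated with a scalar product — so the solution $b$ is unique and $b=A^{-1}(J_{Ay_2}x_1)$, which is \eqref{FormulaPerAggiunto} (and one reads off that $b\in\Ha$). Uniqueness in the sense of the definition of $B_{\sigma_0}$ is automatic as well, since $\sigma_0(z,\cdot)$ separates points.

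The computation has essentially no obstacle; the only point deserving a word is the identity $\im A\cap\Ha=A_\Ha(\Ha)$ used in the reduction. If $J_{Ay_2}x_1=Ab$ with $b=b_1+b_2$, then projecting onto $\Hb$ gives $Ab_2=0$, whence $b_2=0$ by injectivity of $A$, so $J_{Ay_2}x_1=A_\Ha(b_1)\in A_\Ha(\Ha)$; the reverse inclusion is immediate.
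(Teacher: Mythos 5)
Your proof is correct and follows essentially the same route as the paper: both reduce the defining relation \eqref{eq:adT}, via the operator $A$ and the identity \eqref{eq:J_zxy}, to the single linear equation $A\,B_{\sigma_0}(y,x)=J_{Ay_2}x_1$ and then invoke injectivity of $A$. The only (welcome) difference is that you explicitly verify $\im A\cap\Ha=A_\Ha(\Ha)$ using that $A$ preserves the grading, a point the paper leaves implicit when it states the existence condition with $A_\Ha(\Ha)$ rather than $\im A$.
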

\begin{proof}
Assume that $J_{A y_2} x_1 \in A_\Ha(\Ha)$. Thus, for all $z \in \Ht$ we have
$$
\ban{ z , A^{-1}( J_{A y_2} x_1 )  }_{\sigma_0 }
=
\langle z , J_{A y_2} x_1 \rangle
=
\langle [x , z] , A y_2 \rangle
=
\ban{[ x , z] ,y}_{\sigma_0},
$$
hence there exists 
$B_{\sigma_0}(y,x) =  A^{-1}( J_{A y_2} x_1 )$.
If $B_{\sigma_0}(y,x)\in\Ht$ exists, then for all $z \in \Ht$ we have
$$
\langle z , A (B_{\sigma_0}(y,x) )\rangle
=
\ban{ z , B_{\sigma_0}(y,x)  }_{\sigma_0}
=
\ban{ [x, z ] , y }_{\sigma_0 }
=
\langle [x_1 , z ] , A y_2 \rangle
=
\langle J_{A y_2} x_1 , z \rangle.
$$
Therefore, $A(B_{\sigma_0}(y,x) ) = J_{A y_2} x_1 $,
concluding the proof.
\end{proof}

From \eqref{ConditionExistenceTrasposto} and \eqref{FormulaPerAggiunto} we get directly $(1)$. 
From \eqref{ConditionExistenceTrasposto}, \eqref{FormulaPerAggiunto} and \eqref{eq:J_z2} we get directly $(2)$.

\begin{remark}\label{rem:Bsigma0} \em
As a consequence of Proposition~\ref{CaratterizzazioneTrasposto},
precisely of \eqref{ConditionExistenceTrasposto}, \eqref{FormulaPerAggiunto},
for all 
\[
(y,x)\in (\Ha\times \Ht)\cup (\Ht\times \Hb)
\]
we have $J_{Ay_2}x_1=0$, hence
the $B$-adjoint vector $B_{\sigma_0}(y,x)$ 
exists and it vanishes.
\end{remark}
	
\begin{remark}\label{rem:BsigmaFor} \em
For all $z \in \Hb $ and $x \in \Ha$, 
we notice that 
\[
J_{Az}\big(J_{Az}(Ax)\big)=-|Az|^2Ax\in A_\Ha(\Ha),
\]
hence \eqref{ConditionExistenceTrasposto} yields the existence of the $B$-adjoint 
vector $B_{\sigma_0}(z,J_{Az}(Ax))$
and \eqref{FormulaPerAggiunto} gives
\begin{equation}\label{eq:JAz}
B_{\sigma_0}(z,J_{Az}(Ax))
=-|Az|^2x.
\end{equation}
\end{remark}
We use Proposition~\ref{CaratterizzazioneTrasposto}
and the previous remarks to compute the sectional curvatures of specific planes,
according to the following lemma.

\begin{lemma}\label{Formulecurvaturapiani}
Let $\sigma$ be a weak graded Riemannian metric on $\Ht$. 
\begin{enumerate}
 \item If  $x , y \in \Ha $ are $\sigma_0$-orthonormal, then the sectional curvature 
 $K_{\sigma}(\Pi_{x, y})$ exists and 
 $$
 K_{\sigma}(\Pi_{x, y} )
 =
 -\frac{3}{4}
 \norm{[ x,y]}^2_{\sigma_0}.
 $$
 \item 
 For all $z \in \Hb\setminus\set{0}$ and $x \in \Ha\setminus\set{0}$, the vectors $J_{Az}(Ax)$ and $z$ are orthogonal and
 $$ 
 K_{\sigma} 
 (\Pi_{J_{Az}(A x),z})
 =
 \frac{1}{4}
 \frac{ |A z|^4}{\norm{J_{Az}(A x)}^2_{\sigma_0} \norm{z}^2_{\sigma_0}}
 \norm{ x }_{\sigma_0}^2.
 $$
\end{enumerate}	
\end{lemma}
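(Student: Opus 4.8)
The plan is to apply the Arnold formula \eqref{def:K(X,Y)Arnold} in each of the two cases, after checking that all the relevant $B$-adjoint vectors exist. For part $(1)$, I would take $x,y\in\Ha$ that are $\sigma_0$-orthonormal. Since $(y,x),(x,y),(x,x),(y,y)$ all lie in $\Ha\times\Ha\subset\Ha\times\Ht$, Remark~\ref{rem:Bsigma0} applies and tells us that every one of the vectors $B_{\sigma_0}(y,x)$, $B_{\sigma_0}(x,y)$, $B_{\sigma_0}(x,x)$, $B_{\sigma_0}(y,y)$ exists and vanishes. Hence in \eqref{eq:delta_1}--\eqref{eq:delta_2} we get $\delta=0$, $\beta=0$, $B_x=0$, $B_y=0$, while $\alpha=\tfrac12[x,y]$ survives. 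Plugging into \eqref{def:K(X,Y)Arnold} leaves only the term $-3\ban{\alpha,\alpha}_{\sigma_0}=-\tfrac34\norm{[x,y]}_{\sigma_0}^2$, which is exactly the claimed formula.

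For part $(2)$, fix $z\in\Hb\setminus\{0\}$ and $x\in\Ha\setminus\{0\}$ and set $u=J_{Az}(Ax)\in\Ha$. First I would record the orthogonality: since $u\in\Ha$ and $z\in\Hb$, and $\sigma_0$ is graded, $\ban{u,z}_{\sigma_0}=0$; moreover $u\neq 0$ because $|J_{Az}(Ax)|=|Az|\,|Ax|>0$ by \eqref{eq:J_znorm} and the injectivity of $A$. Normalizing, the plane $\Pi_{u,z}$ is spanned by the $\sigma_0$-orthonormal pair $\hat u=u/\norm{u}_{\sigma_0}$, $\hat z=z/\norm{z}_{\sigma_0}$; by the basis-independence of \eqref{def:K(X,Y)Arnold} I may just as well compute with $(u,z)$ and divide by $\norm{u}_{\sigma_0}^2\norm{z}_{\sigma_0}^2$ at the end (being careful that $\alpha$, $\delta$, etc.\ scale quadratically, so the whole expression \eqref{def:K(X,Y)Arnold} scales by $\norm{u}_{\sigma_0}^2\norm{z}_{\sigma_0}^2$ when both vectors are rescaled). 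Now I determine the four $B$-adjoint vectors for the pair $(u,z)$ and $(z,u)$ and the diagonal ones. Since $z\in\Hb$, $(z,u)\in\Ht\times\Hb$, so by Remark~\ref{rem:Bsigma0} we have $B_{\sigma_0}(z,u)=0$, and similarly $B_{\sigma_0}(z,z)=0$ as $(z,z)\in\Ht\times\Hb$; also $B_{\sigma_0}(u,u)=0$ because $(u,u)\in\Ha\times\Ha\subset\Ha\times\Ht$. The only nonzero one is $B_{\sigma_0}(u,z)$: here $y=u$ has $y_2=0$?—no: careful, in the pair $(y,x)=(u,z)$ we need $J_{A\,(z)_2}\,(u)_1$, but actually the relevant pair for the Arnold term $\delta,\beta$ is $B_{\sigma_0}(u,z)$ meaning $y=u$, $x=z$; then $y_2=0$ so $J_{Ay_2}x_1=0$ — that would make everything vanish. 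So instead I use Remark~\ref{rem:BsigmaFor} with the roles set up as $B_{\sigma_0}(z,J_{Az}(Ax))=-|Az|^2x$, i.e.\ the nonvanishing $B$-adjoint vector is $B_{\sigma_0}(z,u)=-|Az|^2 x$, not $B_{\sigma_0}(u,z)$.

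Thus for the pair $(u,z)$ we have $B_{\sigma_0}(z,u)=-|Az|^2x$ and $B_{\sigma_0}(u,z)=0$ (since in $B_{\sigma_0}(u,z)$ the "$y_2$" slot is the $\Hb$-part of $u$, which is $0$), while $B_{\sigma_0}(u,u)=B_{\sigma_0}(z,z)=0$. Hence $\delta=\tfrac12 B_{\sigma_0}(z,u)=-\tfrac12|Az|^2x$ wait — I must match the Arnold convention: with orthonormal pair $(x,y)$ the formula uses $B_{\sigma_0}(x,y)$ and $B_{\sigma_0}(y,x)$, so taking $x:=\hat z$, $y:=\hat u$ gives $\delta=\tfrac12(B_{\sigma_0}(\hat z,\hat u)+B_{\sigma_0}(\hat u,\hat z))=\tfrac12 B_{\sigma_0}(\hat z,\hat u)$ and $\beta=\tfrac12 B_{\sigma_0}(\hat z,\hat u)$, so $\delta=\beta$; also $\alpha=\tfrac12[\hat z,\hat u]=0$ because $[\Ht,\Hb]=\{0\}$ by (I). Therefore \eqref{def:K(X,Y)Arnold} collapses to $K=\ban{\delta,\delta}_{\sigma_0}+2\ban{\alpha,\beta}_{\sigma_0}-3\ban{\alpha,\alpha}_{\sigma_0}-4\ban{B_x,B_y}_{\sigma_0}=\ban{\delta,\delta}_{\sigma_0}=\tfrac14\norm{B_{\sigma_0}(\hat z,\hat u)}_{\sigma_0}^2$. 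Finally $B_{\sigma_0}(\hat z,\hat u)=\tfrac{1}{\norm{z}_{\sigma_0}\norm{u}_{\sigma_0}}B_{\sigma_0}(z,u)=\tfrac{-|Az|^2}{\norm{z}_{\sigma_0}\norm{u}_{\sigma_0}}x$, so $\norm{B_{\sigma_0}(\hat z,\hat u)}_{\sigma_0}^2=\tfrac{|Az|^4}{\norm{z}_{\sigma_0}^2\norm{u}_{\sigma_0}^2}\norm{x}_{\sigma_0}^2$, giving $K_\sigma(\Pi_{u,z})=\tfrac14\tfrac{|Az|^4}{\norm{J_{Az}(Ax)}_{\sigma_0}^2\norm{z}_{\sigma_0}^2}\norm{x}_{\sigma_0}^2$, as claimed.

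The main obstacle, and the only genuinely delicate point, is bookkeeping the argument-order convention in $B_{\sigma_0}(\cdot,\cdot)$: from \eqref{eq:adT} the vector $B_{\sigma_0}(y,x)$ depends on the $\Hb$-part of its \emph{first} argument $y$, so one must be scrupulous about which of $B_{\sigma_0}(u,z)$, $B_{\sigma_0}(z,u)$ is the nonvanishing one when feeding the Arnold formula, and correspondingly whether $\delta$ and $\beta$ add or cancel. Everything else — existence of the $B$-adjoint vectors (Remarks~\ref{rem:Bsigma0} and \ref{rem:BsigmaFor}), the vanishing of $\alpha$ in case $(2)$ from (I), and the quadratic homogeneity that makes the normalization transparent — is routine.
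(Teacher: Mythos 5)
Your proposal is correct and follows essentially the same route as the paper: both parts apply Arnold's formula \eqref{def:K(X,Y)Arnold} after using Remark~\ref{rem:Bsigma0} to kill the $B$-adjoint vectors whose first argument lies in $\Ha$ (or whose second lies in $\Hb$) and Remark~\ref{rem:BsigmaFor} to evaluate the one surviving term $B_{\sigma_0}(z,J_{Az}(Ax))=-|Az|^2x$. Your care about the argument-order convention in $B_{\sigma_0}(\cdot,\cdot)$ is exactly the point the paper's computation also hinges on, and you resolve it the same way.
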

\begin{proof}
Due to Remark~\ref{rem:Bsigma0}, 
$
B_{\sigma}
( x , x)$, $B_{\sigma}( y , y )$, $B_{\sigma}( y , x )$,
$B_{\sigma}( x , y )$
all exist and are null. 
Thus, \eqref{def:K(X,Y)Arnold} 
immediately gives the claim (1).
The term $\alpha$ iof
\eqref{def:K(X,Y)Arnold} obviously vanishes, and again
Remark~\ref{rem:Bsigma0} gives
the existence and the vanishing of
$B_{J_{Az}(Ax)}$, $B_z$,
and $B_{\sigma_0}(J_{Az}(Ax),z)$
in the corresponding Arnold's formula
for the sectional curvature. 
From the property of the mapping $J_Z$,
$Z\in\Hb$, of a Hilbertian H-type group,
it is easy to notice that
$z$ and $J_{Az}(Ax)$ are $\sigma_0$-orthogonal.

Thus, by \eqref{def:K(X,Y)Arnold} applied to the $\sigma_0$-orthonormal basis $z/\|z\|_{\sigma_0}$ and $J_{Ay}(Ax)/\|J_{Ay}(Ax)\|_{\sigma_0}$, we get 
\begin{align*} 
K_{\sigma}
(\Pi_{J_{Az}(Ax),z})
&=\|\delta\|_{\sigma_0}^2=\frac14 
\norm{
	B_{\sigma_0}
	\left (
	\frac{z}{\norm{z}_{\sigma_0}}, 
	\frac{J_{Az}(Ax)}{\norm{J_{Az}(Ax)}}_{\sigma_0}
	\right ) 
}_{\sigma_0}^2  \\
&=\frac1{4\|z\|_{\sigma_0}^2\|J_{Az}(Ax)\|_{\sigma_0}^2}\norm{B_{\sigma_0}(z,J_{Az}(Ax))}^2_{\sigma_0} \\
&=\frac1{4\|z\|_{\sigma_0}^2\|J_{Az}(Ax)\|_{\sigma_0}^2}\norm{|Az|^2x}^2_{\sigma_0},
\end{align*}
where the last equality also relied on 
\eqref{eq:JAz} and immediately gives 
the claim (2).
\end{proof}

\begin{lemma}\label{esistenzavettoridegeneri2}
	Let $\sigma$ be a strictly weak graded Riemannian metric on a Hilbertian H-type group $\Ht = \Ha \oplus \Hb$.
	Then there exists $w_n \in A_\Ha(\Ha)$ such that 
	$|w_n|=1$
	and $\norm{w_n}_{\sigma_0} \to 0$ as $n\to+ \infty$.
\end{lemma}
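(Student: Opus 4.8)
The plan is to exploit the spectral theory of the positive, self-adjoint, injective operator $A_\Ha:\Ha\to\Ha$ attached to $\sigma_0$, and to produce the vectors $w_n$ directly inside $A_\Ha(\Ha)$ by cutting off the bottom of the spectrum of $A_\Ha$. First I would record the algebraic reduction. Since $\sigma$ is graded, $A$ leaves $\Ha$ and $\Hb$ invariant, so it splits as $A=A_\Ha\oplus A_\Hb$ with both blocks positive and self-adjoint; both are injective, because $\langle v,Av\rangle=\|v\|_{\sigma_0}^2>0$ for every $v\neq 0$. As $\Hb$ is finite dimensional, the injective operator $A_\Hb$ is bijective. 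By Proposition~\ref{ImmagineOperatoreAssociatoMetrica}, the metric $\sigma$ being strictly weak means exactly that $A$ is not surjective, and since $A_\Hb$ is onto, the block $A_\Ha$ cannot be surjective. Being self-adjoint and injective, $A_\Ha$ has dense range; not being surjective, that range is not closed, so $A_\Ha$ is not bounded below and $0$ belongs to the spectrum of $A_\Ha$.

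Next I would invoke the resolution of the identity $E$ of $A_\Ha$. Because $0$ lies in the spectrum, the spectral projection $E([0,1/n])$ is nonzero for every $n\in\N^+$: otherwise the spectrum of $A_\Ha$ would be contained in $[1/n,\|A_\Ha\|]$ and the functional calculus would give $|A_\Ha v|\ge (1/n)|v|$, contradicting the previous paragraph. For each $n$ I would pick a unit vector $u_n$ in the range of $E([0,1/n])$; since $A_\Ha$ is injective, $A_\Ha u_n\neq 0$, so I may set
\[
w_n=\frac{A_\Ha u_n}{|A_\Ha u_n|}\in A_\Ha(\Ha),\qquad |w_n|=1 .
\]
The subspace $E([0,1/n])(\Ha)$ reduces $A_\Ha$, hence $w_n$ still lies in it, and therefore the spectral measure $\mu_{w_n}(\cdot)=\langle w_n,E(\cdot)w_n\rangle$ is a probability measure supported in $[0,1/n]$. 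Using gradedness again, $Aw_n=A_\Ha w_n$, so the spectral theorem yields
\[
\|w_n\|_{\sigma_0}^2=\langle w_n,Aw_n\rangle=\int_{[0,1/n]}\lambda\,d\mu_{w_n}(\lambda)\le\frac1n ,
\]
whence $\|w_n\|_{\sigma_0}\le 1/\sqrt n\to 0$, which is the assertion.

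The point requiring care — and the reason one cannot simply re-run Lemma~\ref{esistenzavettoridegeneri} — is that a sequence of unit vectors with $\|v_n\|_{\sigma_0}\to 0$ need not lie in $A_\Ha(\Ha)$, and normalizing $A_\Ha v_n$ does not fix this: the condition $\|v_n\|_{\sigma_0}\to 0$ only controls the first moment $\int\lambda\,d\mu_{v_n}$ of the spectral measure of $v_n$ and allows a vanishing amount of mass to sit far from $0$, which is precisely what gets amplified upon dividing by $|A_\Ha v_n|$. Replacing the raw sequence by unit vectors of the genuinely $A_\Ha$-invariant subspace $E([0,1/n])(\Ha)$ forces the \emph{entire} spectral measure of $w_n$, not merely its first moment, to concentrate near $0$; this is the technical heart of the proof, everything else being routine functional calculus for the bounded positive self-adjoint operator $A_\Ha$.
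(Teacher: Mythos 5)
Your proof is correct, but it takes a genuinely different and heavier route than the paper. The paper simply takes the sequence $h_n$ from Lemma~\ref{esistenzavettoridegeneri} (unit vectors in $\Ha$ with $\norm{h_n}_{\sigma_0}\to 0$), uses the density of $A_\Ha(\Ha)$ in $\Ha$ from Proposition~\ref{ImmagineOperatoreAssociatoMetrica} to pick $v_n\in A_\Ha(\Ha)$ with $|v_n-h_n|\le \frac{1}{2n}$, and then observes via the continuity bound \eqref{ContinuityRiemannianMetric} that $\norm{v_n}_{\sigma_0}\le\norm{h_n}_{\sigma_0}+c_0|v_n-h_n|\to 0$ while $|v_n|\to 1$; normalizing $v_n$ finishes the argument. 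This sidesteps the difficulty you flag at the end: one does not normalize $A_\Ha v_n$ or control spectral measures, one merely perturbs $h_n$ in the \emph{strong} norm into the dense range, and the inequality $\norm{\cdot}_{\sigma_0}\le c_0|\cdot|$ transfers that smallness to the $\sigma_0$-norm, so your ``reason one cannot simply re-run Lemma~\ref{esistenzavettoridegeneri}'' is aimed at a naive strategy the paper does not use. Your spectral argument is nonetheless sound (the reduction to $A_\Ha$ not being bounded below, the nontriviality of $E([0,1/n])$, the invariance of its range under $A_\Ha$, and the first-moment estimate are all correct) and it buys something the paper's proof does not: it is independent of Lemma~\ref{esistenzavettoridegeneri} and yields the explicit rate $\norm{w_n}_{\sigma_0}\le n^{-1/2}$ for unit vectors $w_n$ whose entire spectral measure concentrates at $0$. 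The trade-off is that it invokes the full spectral theorem where a triangle inequality suffices.
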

\begin{proof}
We consider the sequence $h_n$ given by Lemma~\ref{esistenzavettoridegeneri}, 
hence $\|h_n\|_{\sigma_0}\to0$ and $|h_n|=1$.
The image $A_{\Ha}(\Ha)$ is dense in $\Ha$, as a consequence of Proposition~\ref{ImmagineOperatoreAssociatoMetrica}. Therefore, for each $n \in \N\setminus\{0\}$ we 
may choose $v_n \in A_{\Ha}(\Ha)$ such that $|v_n - h_n| \leq \frac{1}{2n}$, and therefore $|v_n| \to 1$. 
We define the unit vectors $w_n = \frac{v_n}{ |v_n|}$ and consider 
\[
\norm{v_n}_{\sigma_0} 
\leq 
\norm{h_n}_{\sigma_0} 
+ 
\norm{h_n - v_n}_{\sigma_0} 
\leq 
\norm{h_n}_{\sigma_0} 
+ 
c_0 |v_n - h_n| 
\leq
\norm{h_n}_{\sigma_0}  +\frac{c_0}{2n} 
\to 0,
\]
concluding the proof.
\end{proof}

\begin{theorem}\label{thm:unboundedcurvature}
Let $\sigma$ be a strictly weak, graded
Riemannian metric on a Hilbertian H-type group $\Ht = \Ha \oplus \Hb$. 
Then there exists a sequence $w_n\in \Ha$ such that for every $z \in \Hb\setminus\set{0}$ 
the following limits hold
\begin{equation}\label{eq:blowup}
	\lim_{n \to \infty}K_{\sigma}(\Pi_{w_n, J_zw_n})=-\infty
\quad\text{and}\quad \lim_{n \to \infty}K_{\sigma}(\Pi_{z,J_{Az}w_n})= +\infty.
\end{equation}
\end{theorem}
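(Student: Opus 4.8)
The plan is to feed the degenerate unit vectors produced by Lemma~\ref{esistenzavettoridegeneri2} into the two explicit curvature formulas of Lemma~\ref{Formulecurvaturapiani}; note that the sequence is chosen once and for all, independently of $z$, and all estimates below are uniform in a fixed $z\in\Hb\setminus\{0\}$. So I would fix $\{w_n\}\subset A_\Ha(\Ha)$ with $|w_n|=1$ and $\|w_n\|_{\sigma_0}\to0$, and write $w_n=Ax_n$ with $x_n\in\Ha$ (legitimate since $\sigma$ is graded, so $A_\Ha(\Ha)\subset\Ha$, and $A$ is injective because $\sigma_0$ is a genuine scalar product). Two short remarks drive everything: first, \eqref{eq:xJ_z} gives $[w_n,J_zw_n]=|w_n|^2z=z$; second, self-adjointness of $A$ gives $\sigma_0(x_n,w_n)=\langle x_n,Aw_n\rangle=\langle Ax_n,w_n\rangle=|w_n|^2=1$, so the Cauchy--Schwarz inequality for $\sigma_0$ forces $\|x_n\|_{\sigma_0}\ge\|w_n\|_{\sigma_0}^{-1}\to+\infty$.

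For the first limit, $P_n=\Pi_{w_n,J_zw_n}$ is a genuine plane contained in $\Ha$ (the two vectors are $\langle\cdot,\cdot\rangle$-orthogonal and $|J_zw_n|=|z|\neq0$), but its natural spanning pair need not be $\sigma_0$-orthonormal; the key move is to $\sigma_0$-orthonormalise rather than normalise in the ambient Hilbert norm. Setting $u_n=w_n/\|w_n\|_{\sigma_0}$ and $v_n=J_zw_n-\sigma_0(J_zw_n,u_n)\,u_n$, the pair $\{u_n,\,v_n/\|v_n\|_{\sigma_0}\}$ is a $\sigma_0$-orthonormal basis of $P_n$, and since $v_n$ differs from $J_zw_n$ by a multiple of $w_n$, bilinearity of the bracket and the first remark give $[u_n,\,v_n/\|v_n\|_{\sigma_0}]=z/(\|w_n\|_{\sigma_0}\|v_n\|_{\sigma_0})$; hence Lemma~\ref{Formulecurvaturapiani}(1) yields
\[
K_\sigma(P_n)=-\frac34\,\frac{\|z\|_{\sigma_0}^2}{\|w_n\|_{\sigma_0}^2\,\|v_n\|_{\sigma_0}^2}.
\]
It then remains only to bound $\|v_n\|_{\sigma_0}$ from above: Cauchy--Schwarz gives $|\sigma_0(J_zw_n,u_n)|\le\|J_zw_n\|_{\sigma_0}$, while \eqref{ContinuityRiemannianMetric} and \eqref{eq:J_znorm} give $\|J_zw_n\|_{\sigma_0}\le c_0|J_zw_n|=c_0|z|$, so $\|v_n\|_{\sigma_0}\le 2c_0|z|$. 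Thus the denominator above tends to $0$ and $K_\sigma(P_n)\to-\infty$.

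For the second limit, I would rewrite $Q_n=\Pi_{z,J_{Az}w_n}=\Pi_{J_{Az}(Ax_n),z}$, which is a genuine plane since $J_{Az}w_n\in\Ha\setminus\{0\}$ and $z\in\Hb\setminus\{0\}$, and apply Lemma~\ref{Formulecurvaturapiani}(2) with $x=x_n$:
\[
K_\sigma(Q_n)=\frac14\,\frac{|Az|^4}{\|J_{Az}w_n\|_{\sigma_0}^2\,\|z\|_{\sigma_0}^2}\,\|x_n\|_{\sigma_0}^2.
\]
Here $|Az|^4$ and $\|z\|_{\sigma_0}^2$ are fixed positive numbers, $\|J_{Az}w_n\|_{\sigma_0}^2\le c_0^2|Az|^2$ is bounded by \eqref{ContinuityRiemannianMetric} and \eqref{eq:J_znorm}, and $\|x_n\|_{\sigma_0}^2\to+\infty$ by the second remark, so $K_\sigma(Q_n)\to+\infty$. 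The only genuine obstacle I anticipate is the bookkeeping in the first limit: one must resist using the $\langle\cdot,\cdot\rangle$-orthonormal spanning pair, which would spuriously give the bounded value $-\tfrac34\|z\|_{\sigma_0}^2/|z|^2$, and recognise that the correct $\sigma_0$-orthonormalisation is exactly what inserts the blowing-up factor $\|w_n\|_{\sigma_0}^{-1}$ while the bracket $[w_n,J_zw_n]=z$ stays fixed; the uniform estimate $\|v_n\|_{\sigma_0}\le 2c_0|z|$ then keeps the blow-up rate under control. Everything else is a direct substitution into Lemma~\ref{Formulecurvaturapiani}.
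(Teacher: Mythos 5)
Your proposal is correct and follows essentially the same route as the paper's proof: the same sequence from Lemma~\ref{esistenzavettoridegeneri2}, the same $\sigma_0$-Gram--Schmidt orthonormalisation of $\{w_n,J_zw_n\}$ combined with $[w_n,J_zw_n]=z$ and the bound $\|J_zw_n\|_{\sigma_0}\le c_0|z|$ for the first limit, and the same Cauchy--Schwarz argument $\|x_n\|_{\sigma_0}\ge\|w_n\|_{\sigma_0}^{-1}$ (the paper's $v_n$ is your $x_n$) fed into Lemma~\ref{Formulecurvaturapiani}(2) for the second. Only notational and bookkeeping details differ.
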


\begin{proof}
We consider the sequence $w_n\in A_\Ha(\Ha)\subset\Ha$ of Lemma~\ref{esistenzavettoridegeneri2} and define the vector
$$
\xi_n
= J_zw_n-\frac{w_n}{\norm{w_n}_{\sigma_0}}
\ban{ J_zw_n,\frac{w_n}{\norm{w_n}_{\sigma_0}} }_{\sigma_0}\in\Ha.
$$
By construction of $\xi_n$, the vectors 
$w_n/\norm{w_n}_{\sigma_0}$ and
$\xi_n/\norm{\xi_n}_{\sigma_0}$
are $\sigma_0$-orthonormal and span 
the 2-dimensional subspace $\Pi_{w_n,J_zw_n}$. 
By Lemma~\ref{Formulecurvaturapiani} and \eqref{eq:xJ_z}, we have 
$$
K_{\sigma}(\Pi_{w_n,J_zw_n})=-\frac{3}{4}\norm{\left[ \frac{w_n}{\norm{w_n}_{\sigma_0}}, \frac{\xi_n}{\norm{\xi_n}_{\sigma_0}} \right]}^2_{\sigma_0} 
=
-\frac{3}{4}
\frac{\norm{z}^2_{\sigma_0}}{\norm{w_n}^2_{\sigma_0}
	\norm{
		J_zw_n-\frac{w_n}{\norm{w_n}^2_{\sigma_0}}
		\ban{ J_z w_n,w_n }_{\sigma_0}
	}^2_{\sigma_0}
}.
$$
We consider the estimates 
\begin{align*}
\norm{
	J_zw_n-\frac{w_n}{\norm{w_n}^2_{\sigma_0}}
	\ban{ J_z w_n,w_n }_{\sigma_0}
}^2_{\sigma_0}
&\leq 2\,\pa{\|J_zw_n\|_{\sigma_0}^2+\frac{\ban{J_zw_n,w_n}_{\sigma_0}^2}{\|w_n\|_{\sigma_0}^2}} \\
&\le 4\, \|J_zw_n\|_{\sigma_0}^2\le 4c_0^2 |J_zw_n|^2 \\
&\le 4 c_0^2 |z|^2,
\end{align*}
where we have applied both \eqref{ContinuityRiemannianMetric} and \eqref{eq:J_znorm}.
It follows that
\[
\norm{\left[ \frac{w_n}{\norm{w_n}_{\sigma_0}}, \frac{\xi_n}{\norm{\xi_n}_{\sigma_0}} \right]}^2_{\sigma_0} 
\geq
\frac{1}{
	4c_0^2|z|^2
}
\frac{\norm{z}^2_{\sigma_0}}{\norm{w_n}^2_{\sigma_0}
}
\to+\infty,
\] 
proving the first limit of \eqref{eq:blowup}
To establish the second limit of \eqref{eq:blowup}, we consider the same previous sequence $w_n\in A_\Ha(\Ha)$,
along with $v_n \in \Ha$ such that $A v_n=w_n$.
By Lemma~\ref{Formulecurvaturapiani} we have
\begin{equation}
		K_{\sigma}(\Pi_{z,J_{Az}(A v_n)}) = 
		\frac{1}{4}
		\frac{
			|Az|^4}{\norm{J_{Az} (A v_n)}^2_{\sigma_0}\norm{z}^2_{\sigma_0}}
		\norm{v_n}_{\sigma_0}^2. 	
\end{equation} 
Again \eqref{ContinuityRiemannianMetric}
and \eqref{eq:J_znorm} give the inequalities
\[
\frac{1}{
	\norm{J_{Az}( A v_n)}_{\sigma_0} 
}
\geq 
\frac{1}{
	c_0 |J_{Az} (A v_n) |
}
=
\frac{1}{
	c_0 |Az| |A v_n|
}
=
\frac{1}{
	c_0 |Az|
}
> 0,
\]
where we have also use the condition 
$|w_n|=|Av_n|=1$. 
By Cauchy-Schwarz inequality, we get
$$
\norm{v_n}_{\sigma_0}
\geq
\ban{v_n ,\frac{w_n }{\norm{w_n}_{\sigma_0}} }_{\sigma_0}
=
\langle Av_n , w_n \rangle 
\frac{1}{\norm{w_n}_{\sigma_0}}
=
\frac{1}{\norm{w_n}_{\sigma_0}}
\to + \infty,
$$
that immediately yields
$K_{\sigma}(\Pi_{z,J_{Az}w_n}) \to + \infty$,
concluding the proof.
\end{proof}

{\bf Acknowledgements.} The authors wish to express their warm gratitude to Martin Bauer for providing valuable comments on the Michor–Mumford conjecture.

\bibliography{References}{}
\bibliographystyle{plain}

\end{document}